\tikzset{
    -Latex,auto,node distance =1 cm and 1 cm,semithick,
    state/.style ={circle, draw, minimum width = 0.85 cm},
    point/.style = {circle, draw, inner sep=0.04cm,fill,node contents={}},
    bidirected/.style={Latex-Latex,dashed},
    el/.style = {inner sep=2pt, align=left, sloped}
}
\theoremstyle{plain} 
\newtheorem{theorem}{Theorem}
\newtheorem{proposition}{Proposition}
\theoremstyle{definition} 
\newtheorem{definition}{Definition}
\theoremstyle{remark} 
\title{SCMD: A Kernel-Based Distance for Structural Causal Models to Quantify Transferability Across Environments}
\author[1]{Théotime Le Goff}
\author[1]{Émilie Devijver}
\affil[1]{Univ Grenoble Alpes, CNRS, Grenoble INP, LIG}
\begin{document}

%

%

\maketitle

\begin{abstract}
Out-of-distribution generalization is key to building models that remain reliable across diverse environments. Recent causality-based methods address this challenge by learning invariant causal relationships in the underlying data-generating process. Yet, measuring how causal structures differ across environments, and the resulting generalization difficulty, remains difficult. To tackle this challenge, we propose the Structural Causal Model Distance (SCMD), a principled metric that quantifies discrepancies between two SCMs by combining (i) kernel-based distances for nonparametric comparison of distributions and (ii) pairwise interventional comparisons to capture differences in causal effects. We show that SCMD is a proper metric and provide a consistent estimator with theoretical guarantees. Experiments on synthetic and real-world datasets demonstrate that SCMD effectively captures both structural and distributional differences between SCMs, providing a practical tool to assess causal transferability and generalization difficulty.
\end{abstract}

\section{Introduction}

One of the major challenges in machine learning lies in the ability of models to generalize beyond the data observed during training. In domain adaptation, the objective is to transfer knowledge  from one or several source environments to a  different target environment \citep{ben2010theory, ganin2016domain, zhou2022domain}. The difficulty arises from the fact that most models primarily capture statistical correlations, which may prove unstable or irrelevant across environments \citep{beery2018recognition}. Beyond the performance drop, this reliance on correlations also raises concerns about interpretability, as it becomes difficult to understand why a model makes a given decision.

A promising alternative is to ground learning in the underlying causal structure of the data-generating process. Unlike correlations, causal mechanisms are assumed to remain invariant across different environments, providing a robust foundation for out-of-distribution prediction. This principle has been leveraged in different ways: \citet{peters2016causal} established causal inference in the linear case, later extended to nonlinear settings by \citet{heinze2018invariant}; \citet{rojas2018invariant} applied invariance to domain generalization, while \citet{magliacane2018domain} developped causal feature selection for domain adaptation. More recently, causal representation learning has been explored as a tool for extrapolation across distributions \citep{vonkügelgen2025representationlearningdistributionalperturbation}.
Together, these developments underline the importance of explicitly reasoning about causal structures when tackling generalization, but this raises two fundamental questions: (i) how to assess the difficulty of generalization from a set of training environments to a target one, and (ii) how to detect redundant information arising when multiple environments provide overlapping insights. 

Existing approaches are limited in this regard. Distributional distances \citep{gretton2012kernel, muandet2013domain} capture only part of the causal picture. Graph-based distances, such as Hamming distance or the Structural Intervention Distance (SID, \citealp{peters2013structural}), quantify structural discrepancies but overlook distributional differences.  Other graph distance measures, typically designed for causal discovery rather than transferability, have also been introduced: \cite{peyrard2020ladder} proposes a distance for each level of the causal ladder,  \cite{henckel2024adjustment} generalizes SID for class of graphs (instead of DAGs), and  \cite{wahl2025separationbaseddistancemeasurescausal}  goes further, extending the approach up to maximal ancestral graphs.
To combine structure- and distribution-aware knowledge, interventional KL divergences have recently been proposed \citep{Wildbergeretal23d} to evaluate causal inference and to quantify how good the model estimate is beyond observational.
In this work, we introduce a new metric, the Structural Causal Model Distance (SCMD), designed to directly compare structural causal models across environments. SCMD builds on kernel-based discrepancy measures to capture distributional shifts, while incorporating interventional comparisons inspired by the Structural Intervention Distance (SID) to reflect structural differences. In doing so, it provides the first principled tool to jointly quantify distributional and structural discrepancies between SCMs, thereby offering a practical proxy for assessing transferability.

The paper is organized as follows: Section \ref{sec:background} provides background and introduces the problem setup. Section \ref{sec:SCMD} presents our main contribution, the Structural Causal Model Distance. Section \ref{sec:est} develops a consistent estimator with theoretical guarantees. Section \ref{sec:exp} reports experiments on both synthetic and real-world data. Section \ref{sec:conc} concludes. Proofs, algorithmic details and additional experiments are provided in  Appendix.

\section{Background}\label{sec:background}
\subsection{Notations}

A \emph{Structural Causal Model} (SCM, \citet{pearl2000models}) is defined as $\mathcal{M}:=\left< \bm{{U}}, \bm{{V}}, \mathcal{F}, {P}({\bm{{U}}}) \right>$  where $\bm{U}$ is a set of exogenous variables taking values in $\mathcal{U}$, $\bm{V} = \{V_1,\ldots, V_d\}$ is a set of endogenous variables taking values in $\mathcal{V} = \mathcal{V}_1\times \ldots \times \mathcal{V}_d$, $\mathcal{F} = \{f_1,\ldots, f_d\}$ is a set of functions such that for each $1\leq j \leq d$, 
 $   f_j :
    ({pa}_j, {u}_j)\in (\mathcal{V}_{\mathrm{Pa}(V_j)} \times \mathcal{U}_j )  \mapsto v_j \in \mathcal{V}_j
$
and ${P}(\bm{U})$ is a probability distribution over mutually independent exogenous variables $\bm{U}$, where ${\mathrm{Pa}}(V_j)$ is the set of parents of $V_j$ and $pa_j$ its realization. 
The SCM $\mathcal{M}$ induces a \emph{directed acyclic graph} (DAG) $\mathcal{G}=(\bm{V},\mathcal{E})$ with an edge from $V_i$ to $V_j$ whenever $V_i\!\in\!\mathrm{Pa}(V_j)$. An \emph{intervention} $\mathrm{do}(V_k=v_k)$ replaces $f_k$ by the constant $v_k$, yielding a modified SCM and the corresponding {joint} interventional distribution $P_{\mathrm{do}(V_k=v_k)}(\bm{V})$. 

We assume throughout that the causal Markov condition and causal sufficiency (i.e., no hidden confounders) hold. 

\subsection{RKHS Embeddings and kernel-based distances} 
Comparing SCMs requires measuring discrepancies not only between their joint observational distributions, but also between their conditional and interventional distributions. Kernel methods provide a  framework for this comparison, as they allow us to embed distributions in a reproducing kernel Hilbert space (RKHS) and define metrics such as the Maximum Mean Discrepancy (MMD) and its conditional/interventional variants.

Let $(\mathcal{H}_{\mathcal{V}_j}, \langle \cdot, \cdot \rangle)$ be a reproducing kernel Hilbert space (RKHS, \cite{berlinet2011reproducing}) of real-valued functions on $\mathcal{V}_j$ with reproducing kernel $k_{\mathcal{V}_j} : \mathcal{V}_j \times \mathcal{V}_j \to \mathbb{R}$. The kernel mean embedding of a random variable $V_j \in \mathcal{V}_j$ with distribution $P(V_j)$ is defined as
\[
\mu_{P({V_j})}(\cdot) := \mathbb{E}_{V_j \sim {P}(V_j)}[k_{\mathcal{V}_j}(V_j, \cdot)] \in \mathcal{H}_{\mathcal{V}_j}.
\]
Given two joint distributions $P^1(V_j)$ and $P^2(V_j)$, the \emph{Maximum Mean Discrepancy} (MMD, \cite{gretton2012kernel}) is
\[
\mathrm{MMD}(P^1(V_j),P^2(V_j)) := \|\mu_{P^1(V_j)}(\cdot) - \mu_{P^2 (V_j)}(\cdot)\|_{\mathcal{H}_{\mathcal{V}_j}},
\]
which defines a metric between distributions for characteristic kernels \citep{fukumizu2007kernel}.

Kernel conditional mean embeddings \citep{park2020measure} represent conditional expectation operators in an RKHS $\mathcal{H}_{\mathcal{V}_j}$, which allows us  to define the \emph{Maximum Conditional Mean Discrepancy}  between two conditional distributions $P^2(V_j \mid V_i = v_i)$ and $P^2(V_j \mid V_i = v_i)$ by, {for $v_i \in \mathcal{V}_i$},
\begin{align*}
   & \mathrm{MCMD}(P^1(V_j \mid V_i = v_i), P^2(V_j \mid V_i = v_i))\\
    := &\|\mu_{P^1(V_j \mid V_i = v_i)}(\cdot) - \mu_{P^2(V_j \mid V_i = v_i)}(\cdot)\|_{\mathcal{H}_{\mathcal{V}_j}}.
\end{align*}

For interventional distributions, the adjustment formula is used to express them in terms of conditionals, which can be estimated from observational data.
For some $\bm{Z}$ \citep{perkovic2015}, a set of variables we can adjust for,
$$P_{\mathrm{do}(V_i = v_i)}(V_j) = \mathbb{E}_{\bm{Z} \sim P(\bm{Z})}[{P(V_j \mid V_i = v_i, \bm{Z})}],$$
with the following conventions:
\[
{P_{\mathrm{do}(V_i = v_i)}(V_j)} =
\begin{cases}
    {P(V_j \mid V_i=v_i)} & \text{ if } \bm{Z} = \emptyset,\\
    {P(V_j)} & \text{ if } V_i \not\to V_j,
\end{cases}
\]
where $V_i \not\to V_j$ denotes the absence of any path, whether direct or indirect, from $V_i$ to $V_j$. Then the interventional mean embedding of $V_j$ under the intervention $\mathrm{do}(V_i = v_i)$ is defined as in \citet{dhanakshirur2023continuous},
\begin{align}
    &\mu_{P_{\mathrm{do}(V_i = v_i)}(V_j)}(\cdot) \label{eq:cases}\\ 
    := & 
\begin{cases}
 \mu_{P(V_j)}(\cdot)  &\text{ if } V_i \not\to V_j, \nonumber\\
  \mu_{P(V_j\mid V_i=v_i)}(\cdot)  &\text{ if } \bm{Z} = \emptyset,\nonumber\\
\mathbb{E}_{\bm{Z} \sim P(\bm{Z})}[\mu_{P(V_j \mid V_i = v_i, \bm{Z}=\bm{z})}(\cdot)] &\text{ elsewhere.}\nonumber
\end{cases}
\end{align}

The \emph{Maximum Interventional Mean Discrepancy}  between two interventional distributions is given by:
\begin{align*}
    &\mathrm{MIMD}(P^1_{\mathrm{do}(V_i=v_i)}(V_j), P^2_{\mathrm{do}(V_i = v_i)}(V_j))\nonumber\\
    := &\|\mu_{P^1_{\mathrm{do}(V_i = v_i)}(V_j)}(\cdot) - \mu_{P^2_{\mathrm{do}(V_i = v_i)}(V_j)}(\cdot)\|_{\mathcal{H}_{\mathcal{V}_j}}.
\end{align*}
This can be seen as the MCMD  between two expectations of conditional embeddings.

\subsection{Estimation of RKHS embeddings}
Let $\mathcal{D} = (v^{(n)}_j)_{1\leq j \leq d, 1\leq n \leq N}$ be a sample, generated from an SCM $\mathcal{M}$. While the above embeddings are defined in population, in practice we only have access to finite samples. We now describe how to estimate the embeddings consistently from data, which will be crucial for our empirical evaluation of SCMD. For more details on estimation of RKHS embeddings, see \cite{surveyMuandet}. 

\paragraph{Marginal embeddings}  
The kernel mean embedding is estimated empirically as in \cite{gretton2012kernel}:
\[
\hat{\mu}^N_{P(V_j)}(\cdot) = \frac{1}{N} \sum_{n=1}^N k_{\mathcal{V}_j}(v_j^{(n)}, \cdot).
\]
This estimator is consistent, and converges with a rate of $O(N^{-1/2})$.
\paragraph{Conditional embeddings}  
Following \cite{park2020measure}, we estimate the conditional kernel mean embedding via regularized RKHS regression: for a regularization parameter $\lambda \geq 0$, {for $v_i \in \mathcal{V}_i$},
\[
\hat{\mu}^{\lambda, N} _{P(V_j \mid V_i=v_i)}(\cdot) = \bm{k}_{V_i}^\top(v_i) \, \bm{W}^\lambda_{V_i} \, \bm{k}_{V_j} (\cdot),
\]
where
\[
\bm{k}_{V_i}(v_i) = \begin{pmatrix} k_{\mathcal{V}_i}(v_i^{(1)},v_i) \\ \vdots \\ k_{\mathcal{V}_i}(v_i^{(N)},v_i) \end{pmatrix}, 
\quad 
\bm{W}^\lambda_{V_i} = (\bm{K}_{V_i} + N \lambda I_N)^{-1},
\]
and $[\bm{K}_{V_i}]_{st} = k_{\mathcal{V}_i}(v_i^{(s)}, v_i^{(t)})$  is the Gram matrix comprising all pairwise similarity values between the observations of the sample from $V_i$.
This estimator is consistent, and converges with a rate of $O(N^{-1/4})$.
\paragraph{Interventional embeddings}  
For  interventions with an adjustment set $\bm{Z}$, the interventional kernel mean embedding can be expressed as
\[
\mu_{P_{\mathrm{do}(V_i=v_i)}(V_j)}(\cdot) = \mathbb{E}_{\bm{Z} \sim P(\bm{Z})}[\mu_{P(V_j \mid V_i=v_i, \bm{Z}=\bm{z})}(\cdot)].
\]  
We use the  estimator by \cite{dhanakshirur2023continuous}:
\begin{align*}
    \hat{\mu}^{\lambda,N}_{P_{\mathrm{do}(V_i=v_i)}(V_j)}(\cdot) &= \frac{1}{N} \sum_{n=1}^N \hat{\mu}_{P(V_j \mid V_i=v_i,\bm{Z}=\bm{z}^{(n)})}(\cdot)\\
    &\hspace{-2cm}= \left(\frac{1}{N} \sum_{n=1}^N   \bm{k}_{V_i, \bm{Z}}^\top(v_i,\bm{z}^{(n)})\right) \bm{W}^\lambda_{V_i, \bm{Z}} \, \bm{k}_{V_j}(\cdot),
\end{align*}
where $\bm{k}_{V_i, \bm{Z}}^\top(v_i,\bm{z}):= \bm{k}_{V_i}^\top(v_i)\odot\bm{k}_{\bm{Z}}^\top(\bm{z})$ is a vector of size $N$, with $\odot$ the Hadamard product, {and $\bm{W}^\lambda_{V_i, \bm{Z}} = (\bm{K}_{V_i,\bm{Z}} + N \lambda I_N)^{-1}$ where $\bm{K}_{V_i,\bm{Z}}$ is also constructed with the Hadamard product}.


\subsection{Problem setup}
We consider two SCMs $\mathcal{M}^1$ and $\mathcal{M}^2$ defined on the same variables $\bm{V}$, inducing graphs $\mathcal{G}^1=(\bm{V},\mathcal{E}^1)$ and $\mathcal{G}^2=(\bm{V},\mathcal{E}^2)$ and joint distributions ${P}^1({\bm{V}})$ and ${P}^2({\bm{V}})$. 
Let $\mathcal{D}^1$ and $\mathcal{D}^2$ be two datasets sampled from the observational distributions entailed by $\mathcal{M}^1$ and $\mathcal{M}^2$ respectively. 

{Our objective is to define and estimate a distance between $\mathcal{M}^1$ and $\mathcal{M}^2$ that captures discrepancies in both  causal structure and  interventional behaviour.}

\section{SCMD: Measuring Structural and Distributional Differences Between Structural Causal Models} \label{sec:SCMD}
\subsection{The Structural Causal Model Distance}
We introduce  the Structural Causal Model Distance (SCMD), a distance between two SCMs $\mathcal{M}^1$ and $\mathcal{M}^2$ that quantifies how their induced distributions evolve under interventions. Intuitively, SCMD measures the discrepancy between the interventional distributions induced by two SCMs, for all possible pairs of intervention-target variables. 
This captures both structural differences (e.g., reversed causal edges) and distributional differences (e.g., changes in noise or functional relationships).

\begin{definition}
Let $\mathcal{M}^1,\mathcal{M}^2$ two SCMs and $(\bm{v}^1,\bm{v}^2)$ two vectors of intervention values. We define the \emph{Structural Causal Model Distance} (SCMD) by 
\begin{align}
&\mathrm{SCMD}(\mathcal{M}^1, {\mathcal{M}}^2 ; \bm{v}^1,\bm{v}^2) \label{SCMD}\\
:=& \sum_{1\leq i,j \leq d}
\mathrm{MIMD} \left({P}_{\mathrm{do}(V_i = v_i^1)}^1(V_j),
{P}_{\mathrm{do}(V_i = v_i^2)}^2(V_j)\right) \nonumber\\
= & \sum_{1\leq i,j \leq d} \|\mu_{P^1_{\mathrm{do}(V_i = v_i^1)}(V_j)}(\cdot) - \mu_{P^2_{\mathrm{do}(V_i = v_i^2)}(V_j)}(\cdot)\|_{\mathcal{H}_{\mathcal{V}_j}}. \nonumber
\end{align}
\end{definition}
In practice, we use the parent set $\mathrm{Pa}(V_i)$ as the adjustment set for estimating interventional distributions $P_{\mathrm{do}(V_i = v_i)}(V_j)$, which ensures identifiability when the causal graph is known. However, other adjustment sets could be considered for improved estimation efficiency (see, e.g., \citet{NEURIPS2021_8485ae38}).
Two vectors of interventions have to be specified: if only one value is set, the vector is the same for the two environments. 

\paragraph{Prediction-oriented variant.}
In many applications, only the effect of interventions on a specific outcome variable  is of interest. We thus introduce a prediction-oriented variant, P-SCMD, which restricts the comparison to interventions affecting a target variable $V_1$:
\begin{align*}
&\mathrm{P\mbox{-}SCMD}(\mathcal{M}^1, {\mathcal{M}}^2 ; (v_i^1)_{2\leq i \leq d}, (v_i^2)_{2\leq i \leq d})) \\
:=& \sum_{2\leq i \leq d}
\mathrm{MIMD}\left({P}_{\mathrm{do}(V_i = v_i^1)}^1({V_1}),
{P}^2_{\mathrm{do}(V_i = v_i^2)}({V_1})\right). 
\end{align*}
This variant provides a task-specific comparison between models, focusing on the causal effect of interventions on the prediction target.

\paragraph{Choice of intervention values.}
Since the choice of intervention values $(\bm{v}^1, \bm{v}^2)$ may be arbitrary, we also define the expected SCMD (E-SCMD), which averages the distance over the joint distribution of $\bm{V}$. This provides a global measure of discrepancy, independent of specific intervention values.
\begin{align*}
&\mathrm{E\mbox{-}SCMD}(\mathcal{M}^1, {\mathcal{M}}^2) \\
:= &\mathbb{E}_{\bm{V}^1, \bm{V}^2}
\bigl[\mathrm{SCMD}(\mathcal{M}^1, {\mathcal{M}}^2 ; \bm{V}^1, \bm{V}^2)\bigr].
\end{align*}
Other choices are possible, for instance focusing on a grid of intervention values or on application-specific points of interest.

\subsection{Properties of SCMD}
We now establish the theoretical properties of SCMD, which justify its use as a metric for comparing SCMs. In particular, we show that SCMD is a proper distance, bounded, and related to existing metrics such as SID. Proofs are provided in the Appendix.


Since SCMD is defined as a sum of MIMD terms, each of which is a valid distance in an RKHS, it inherits their metric properties.

\begin{proposition}[SCMD is a distance]
Let $\mathcal{M}^1,\mathcal{M}^2$ two SCMs and $(\bm{v}^1, \bm{v}^2)$ two vectors of intervention values. If the kernel $k_{\mathcal{V}}$ is characteristic, then $\mathrm{SCMD}$ defined in Equation \eqref{SCMD} is a distance: 
    \begin{itemize}
        \item  \textbf{Separation:} $\mathrm{SCMD}(\mathcal{M}^1,\mathcal{M}^2 ; \bm{v}^1, \bm{v}^2) = 0\\ \Leftrightarrow \mathcal{M}^1 = \mathcal{M}^2$ almost surely;
        \item  \textbf{Non-negativity:}  $\mathrm{SCMD}(\mathcal{M}^1,\mathcal{M}^2; \bm{v}^1, \bm{v}^2) \geq 0$;
        \item \textbf{Symmetry:} $$\mathrm{SCMD}(\mathcal{M}^1,\mathcal{M}^2; \bm{v}^1, \bm{v}^2) = \mathrm{SCMD}(\mathcal{M}^2,\mathcal{M}^1; \bm{v}^2, \bm{v}^1);$$
        \item \textbf{Triangle inequality:} for any third SCM $\mathcal{M}_3$,  \begin{align*}
            &\mathrm{SCMD}(\mathcal{M}^1,\mathcal{M}^2; \bm{v}^1, \bm{v}^2)\\
            \leq &\mathrm{SCMD}(\mathcal{M}^1,\mathcal{M}^3; \bm{v}^1, \bm{v}^3) +\mathrm{SCMD}(\mathcal{M}^3,\mathcal{M}^2; \bm{v}^3, \bm{v}^2).
        \end{align*}
    \end{itemize}
\end{proposition}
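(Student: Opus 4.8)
The plan is to leverage that $\mathrm{SCMD}$ is a finite sum of $\mathrm{MIMD}$ terms, each of which is the norm $\|\mu_{P^1_{\mathrm{do}(V_i=v_i^1)}(V_j)}(\cdot)-\mu_{P^2_{\mathrm{do}(V_i=v_i^2)}(V_j)}(\cdot)\|_{\mathcal{H}_{\mathcal{V}_j}}$ in a Hilbert space, so three of the four axioms are inherited termwise. \emph{Non-negativity} is immediate since each summand is a norm. \emph{Symmetry} holds because $\|a-b\|=\|b-a\|$ in any normed space, and swapping $(\mathcal{M}^1,\bm{v}^1)$ with $(\mathcal{M}^2,\bm{v}^2)$ replaces $a-b$ by $b-a$ in every summand while leaving $\mathcal{H}_{\mathcal{V}_j}$ unchanged. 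For the \emph{triangle inequality} I would fix a third model $\mathcal{M}^3$ with vector $\bm{v}^3$, set $\mu^k_{ij}:=\mu_{P^k_{\mathrm{do}(V_i=v_i^k)}(V_j)}$, write $\mu^1_{ij}-\mu^2_{ij}=(\mu^1_{ij}-\mu^3_{ij})+(\mu^3_{ij}-\mu^2_{ij})$, apply the triangle inequality in $\mathcal{H}_{\mathcal{V}_j}$ to each summand, and sum over $1\le i,j\le d$; the only thing to notice is that the intermediate model and value are the \emph{same} on both sides, so the bound holds summand by summand and hence after summation.

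The substantive axiom is \emph{separation}. Since every summand is non-negative, $\mathrm{SCMD}=0$ iff every $\mathrm{MIMD}$ term vanishes. Characteristicness of the kernel ensures the mean-embedding map is injective on probability measures on each $\mathcal{V}_j$ (the ``$\mathrm{MMD}$ is a metric'' property, cf.\ \citet{fukumizu2007kernel}); applied to the interventional laws, this gives that the $(i,j)$ term is zero exactly when $P^1_{\mathrm{do}(V_i=v_i^1)}(V_j)=P^2_{\mathrm{do}(V_i=v_i^2)}(V_j)$. So $\mathrm{SCMD}=0$ is equivalent to the equality of all single-variable interventional marginals compared by $\mathrm{SCMD}$. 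The implication ``$\mathcal{M}^1=\mathcal{M}^2$ and $\bm{v}^1=\bm{v}^2$ $\Rightarrow\mathrm{SCMD}=0$'' is then trivial. For the converse I would first exploit the diagonal terms: since $\mathrm{do}(V_j=v_j^k)$ forces $V_j$ to the constant $v_j^k$, the $(j,j)$ term is $\|k_{\mathcal{V}_j}(v_j^1,\cdot)-k_{\mathcal{V}_j}(v_j^2,\cdot)\|_{\mathcal{H}_{\mathcal{V}_j}}$, which vanishes only if $v_j^1=v_j^2$; this already forces $\bm{v}^1=\bm{v}^2=:\bm{v}$. It then remains to upgrade ``$P^1_{\mathrm{do}(V_i=v_i)}(V_j)=P^2_{\mathrm{do}(V_i=v_i)}(V_j)$ for all $i,j$'' to ``$\mathcal{M}^1=\mathcal{M}^2$'', using the causal Markov condition and causal sufficiency.

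This last step is the main obstacle and the point where the statement must be read carefully. First, ``$\mathcal{M}^1=\mathcal{M}^2$'' can only mean equality of the induced observational and interventional distributions, since the structural assignments and noise laws are not identifiable from distributions, and the ``almost surely'' should be understood in that sense. Second, at a single fixed $(\bm{v}^1,\bm{v}^2)$ the probed marginals $\{P_{\mathrm{do}(V_i=v_i)}(V_j)\}_{i,j}$ do not in general pin down the full interventional behaviour (joint effects on several targets, or effects at other intervention values), so I would either (i) state the equivalence as equality of exactly those interventional distributions that $\mathrm{SCMD}$ compares, in which case the forward direction is just the injectivity argument above, or (ii) strengthen the hypothesis to ``$\mathrm{SCMD}(\mathcal{M}^1,\mathcal{M}^2;\bm{v},\bm{v})=0$ for all $\bm{v}$'' (equivalently $\mathrm{E\mbox{-}SCMD}=0$), which recovers $P_{\mathrm{do}(V_i=v_i)}(V_j)$ for every $i,j$ and every value $v_i$, and then a do-calculus / factorisation argument under causal sufficiency identifies the graph together with the observational law, hence the two models. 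I would present the proof in form (i) for the proposition as stated and flag (ii) as the genuinely identifying version; the remaining verifications — finiteness of all norms under a bounded kernel, and that the choice of $\mathrm{Pa}(V_i)$ as adjustment set plays no role in any of the above — are routine.
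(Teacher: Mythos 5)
Your proof is correct and follows essentially the same route as the paper for non-negativity, symmetry and the triangle inequality: the paper simply notes that SCMD is a sum of MIMD terms, each of which is a metric, and inherits these three axioms termwise, exactly as you do. Where you differ is in the care taken over separation, and the comparison is instructive. The paper's proof of $(\Rightarrow)$ is your option (i): it deduces from the characteristic kernel (via Theorem~5.2 of \citet{park2020measure}, and under an extra assumption of mutual absolute continuity that appears only in the proof, not in the statement) that all the probed interventional marginals coincide, and then \emph{declares} this to mean $\mathcal{M}^1=\mathcal{M}^2$ by identifying equality of SCMs with interventional equivalence in the sense of \citet{bongers2021foundations}, explicitly flagged as an abuse of notation. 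Your objection that the single-variable marginals $P_{\mathrm{do}(V_i=v_i)}(V_j)$ at one fixed $(\bm v^1,\bm v^2)$ do not in general determine the full interventional behaviour is well taken and is not resolved by the paper; it is absorbed into that abuse of notation, so your option (ii) (quantifying over all intervention values, i.e.\ $\mathrm{E\mbox{-}SCMD}=0$) is the honest strengthening the paper does not make. One caution on your diagonal argument forcing $\bm v^1=\bm v^2$: the definition sums over all $1\le i,j\le d$, but the paper's boundedness proof counts only $d(d-1)$ MIMD terms, indicating the terms $i=j$ are meant to be excluded; if so, that step of your argument is unavailable and $\bm v^1=\bm v^2$ must instead be taken as part of the hypothesis (as the convention ``if only one value is set, the vector is the same for the two environments'' suggests). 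None of this invalidates your proof; it matches the paper's intent and is more explicit about where the identification claim actually stops.
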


In contrast with SID \citep{peters2013structural}, SCMD is separable, allowing to test whether two SCMs are identical, and symmetric, which is natural for model comparison.

In the next proposition, we go further by bounding the SCMD.

\begin{proposition}[Bounds and relation to SID]
Let $\mathcal{M}^1,\mathcal{M}^2$ two SCMs and $\bm{v}^1, \bm{v}^2$ two vectors of intervention values. 
\begin{enumerate}
\item \textbf{Boundedness:} if the kernel $k_{\mathcal{V}}$ is bounded, then SCMD is bounded as well:
     if for all $(v_j, v'_j) \in \mathcal{V}_j^2$, $k_{\mathcal{V}_j}(v_j,v'_j)\leq K$ for $K\in \mathbb{R}^+$, then 
 $$\mathrm{SCMD}(\mathcal{M}^1,\mathcal{M}^2 ; \bm{v}^1, \bm{v}^2)\leq \sqrt{2K}d(d-1).$$

\item \textbf{Relation to SID:} 
\begin{align*}
   & \mathrm{SCMD}(\mathcal{M}^1,\mathcal{M}^2; \bm{v}^1, \bm{v}^2)=0\\
    &\Rightarrow 
     \mathrm{SID}(\mathcal{G}^1,\mathcal{G}^2)= \mathrm{SID}(\mathcal{G}^2,\mathcal{G}^1)=0. 
\end{align*}
\end{enumerate}
\end{proposition}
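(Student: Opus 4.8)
\emph{Boundedness.} The plan is to bound each summand of $\mathrm{SCMD}$ separately. Fix an ordered pair $(i,j)$ with $i\neq j$ (restricting, as the stated bound $d(d-1)$ indicates, to the off-diagonal pairs). The key point is that $P^1_{\mathrm{do}(V_i=v_i^1)}(V_j)$ and $P^2_{\mathrm{do}(V_i=v_i^2)}(V_j)$ are honest probability measures on $\mathcal V_j$: in the third case of Equation~\eqref{eq:cases} the interventional law is the mixture $\mathbb E_{\bm Z\sim P(\bm Z)}[P(V_j\mid V_i=v_i,\bm Z=\bm z)]$, still a probability measure on $\mathcal V_j$, and its embedding, being $\mathbb E_{\bm Z}[\mu_{P(V_j\mid V_i=v_i,\bm Z=\bm z)}]$, has norm at most $\sqrt K$ by Jensen's inequality together with $\|\mu_P\|_{\mathcal H_{\mathcal V_j}}^2=\mathbb E[k_{\mathcal V_j}]\le K$; the other two cases are immediate. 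Hence the corresponding $\mathrm{MIMD}$ term is an $\mathrm{MMD}$ between two probability measures, and expanding $\|\mu_{P^1}-\mu_{P^2}\|_{\mathcal H_{\mathcal V_j}}^2$ as $\mathbb E[k_{\mathcal V_j}(X,X')]-2\,\mathbb E[k_{\mathcal V_j}(X,Y)]+\mathbb E[k_{\mathcal V_j}(Y,Y')]$, with $X,X'\sim P^1$ and $Y,Y'\sim P^2$ independent, then bounding the first and third terms by $K$ and dropping the non-negative cross term, gives $\mathrm{MIMD}\le\sqrt{2K}$ for each pair. Summing over the $d(d-1)$ pairs yields $\mathrm{SCMD}\le\sqrt{2K}\,d(d-1)$. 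The only mildly delicate point is the constant: the sharp $\sqrt{2K}$ uses non-negativity of $k_{\mathcal V_j}$ (true for the kernels used in practice); without it the triangle inequality still gives the looser $2\sqrt K$ per term.

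\emph{Relation to SID.} Assume $\mathrm{SCMD}(\mathcal M^1,\mathcal M^2;\bm v^1,\bm v^2)=0$. As a sum of non-negative $\mathrm{MIMD}$ terms this forces each term to vanish, so $\mu_{P^1_{\mathrm{do}(V_i=v_i^1)}(V_j)}=\mu_{P^2_{\mathrm{do}(V_i=v_i^2)}(V_j)}$ for every $(i,j)$; since $k_{\mathcal V}$ is characteristic, the mean-embedding map is injective on probability measures, hence $P^1_{\mathrm{do}(V_i=v_i^1)}(V_j)=P^2_{\mathrm{do}(V_i=v_i^2)}(V_j)$ for all $i,j$. I would then invoke the separation property of Proposition~1: $\mathrm{SCMD}=0$ implies $\mathcal M^1=\mathcal M^2$ (almost surely), so in particular the induced DAGs coincide, $\mathcal G^1=\mathcal G^2=:\mathcal G$. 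The claim then follows from the standard property $\mathrm{SID}(\mathcal G,\mathcal G)=0$ --- adjusting for a node's own parent set in the true graph always recovers the correct interventional distribution, so no ordered pair is counted --- whence $\mathrm{SID}(\mathcal G^1,\mathcal G^2)=\mathrm{SID}(\mathcal G^2,\mathcal G^1)=\mathrm{SID}(\mathcal G,\mathcal G)=0$.

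\emph{Main obstacle.} The subtle step is ``$\mathcal M^1=\mathcal M^2$ almost surely $\Rightarrow\mathcal G^1=\mathcal G^2$'', since two SCMs can formally disagree on edge sets while entailing identical observational and interventional laws (e.g.\ an edge $V_i\to V_j$ whose mechanism is constant in $v_i$), so one must either adopt the convention that makes equal SCMs carry equal graphs or, more robustly, argue directly at the level of the SID counting. For the direct route I would show that for every ordered pair $(i,j)$ the parent-adjustment interventional distribution built from $\mathrm{Pa}^{\mathcal G^2}(V_i)$ in $\mathcal M^2$ equals the one built from $\mathrm{Pa}^{\mathcal G^1}(V_i)$ in $\mathcal M^1$ --- both coincide with the true interventional law, a node's parents always being a valid adjustment set in its own graph --- so the SID detects no mistaken pair in either direction. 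Making that airtight requires handling the special cases in the definition of SID (a pair counted when $j\in\mathrm{Pa}^{\mathcal H}(V_i)$, and the treatment of non-descendant pairs) and, when $\bm v^1,\bm v^2$ are single intervention points rather than ranging over all values, bridging ``the interventional laws agree at $v_i^1,v_i^2$'' to structural agreement --- which is precisely the content that Proposition~1 supplies.
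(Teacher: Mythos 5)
Your proposal is correct and follows essentially the same route as the paper: boundedness is obtained by bounding each interventional embedding's norm by $\sqrt{K}$ (via Jensen) and hence each $\mathrm{MIMD}$ term by $\sqrt{2K}$ before summing over the $d(d-1)$ pairs, and the SID implication is derived from the separation property followed by the observation that identical graphs have zero SID. The only divergence is in how graph equality is justified: the obstacle you flag (interventional equivalence of the models need not immediately force $\mathcal{G}^1=\mathcal{G}^2$) is closed in the paper by invoking \citet{hauser2012} --- interventional equivalence under interventions on every variable pins down the skeleton and v-structures, hence the DAG --- which is essentially the ``direct route'' you sketch, packaged as a known identifiability result; your remark that the sharp constant $\sqrt{2K}$ needs non-negativity of the cross term is a point the paper's proof uses implicitly without comment.
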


The boundedness of SCMD follows from the boundedness of the kernel $k_{\mathcal{V}_j}$, ensuring that each MIMD term is at most $\sqrt{2K}$. This property is useful for normalization and theoretical analysis.
The second implication  shows that SCMD is at least as discriminative as SID: if two SCMs are identical in terms of SCMD, they must also share the same causal graph. However, the converse is not true, as SCMD also captures distributional differences within the same graph.

\subsection{Illustrative Example} \label{sec:ex}
We now illustrate how SCMD captures both parametric (same causal graph, different joint distributions) and structural (different causal graphs, same joint distribution) differences between SCMs, using a simple linear Gaussian example. This example also highlights the limitations of existing metrics (MMD, SID) and the complementary strengths of SCMD.

Let $a\neq 0$, and consider two SCMs:

\begin{align*}
    &\mathcal{M}^{1,a}:\begin{cases} X \sim \mathcal{N}(0,\,1)\\[0.5ex] 
    Y = a X + \varepsilon_Y, \text{ where } \varepsilon_Y \sim \mathcal{N}(0,\,1); \end{cases} \\
    &\mathcal{M}^{2,a}:\begin{cases} Y \sim \mathcal{N}\!\left(0,\,1+a^2\right)\\[0.5ex] X = \tfrac{a}{1+a^2} Y + \varepsilon_X, \text{ where } \varepsilon_X \sim \mathcal{N}\!\left(0,\,\tfrac{1}{1+a^2}\right). \end{cases}
\end{align*}
They lead to the following graphs, respectively:
\begin{center}
\begin{tikzpicture}[state/.style={circle, draw, minimum size=6mm}]
    \node[state] (X1) at (0,0) {$X$};
    \node[state] (Y1) at (1.5,0) {$Y$};
    \path (X1) edge (Y1);
    \node[draw=none] (a1) at (-0.9,0) {$\mathcal{G}^1$:};
    
    \node[state] (X2) at (4,0) {$X$};
    \node[state] (Y2) at (5.5,0) {$Y$};
    \path (Y2) edge (X2);
    \node[draw=none] (a2) at (3.1,0) {$\mathcal{G}^2$:};
\end{tikzpicture}
\end{center}

Despite the reversed direction of causality, both SCMs induce the same joint Gaussian distribution :
$$P^{1,a} = P^{2,a} = 
\mathcal{N}\!\left(
\begin{pmatrix}
    0 \\
    0
\end{pmatrix},
\begin{pmatrix}
    1 & a \\
    a & 1+a^2
\end{pmatrix}\right).$$

Let $(\mathcal{H}= \{f : \mathbb{R} \rightarrow \mathbb{R} \}, \langle \cdot, \cdot \rangle)$ an RKHS, with  the Gaussian kernel $k$, parameterized by a variance $\sigma^2$. Let $\bm{v}^1=\bm{v}^2=(x,y)$ the value of the interventions we consider. 

\paragraph{Case 1: Parametric Differences (Same Graph, Different Parameters)}  Consider comparing $\mathcal{M}^{1,a}$ and $\mathcal{M}^{1,b}$ with $a \neq b \neq 0$. Here, the causal graph is the same, but the functional relationship between $X$ and $Y$ differs.
MMD between the joint distributions $P^{1,a}$ and $P^{1,b}$ is non-zero, as the joint distributions differ. However, SID between $\mathcal{G}^1$ and $\mathcal{G}^1$ is zero, as the graphs are identical. SCMD, in contrast, detects the parametric difference between the two models:
\begin{align*}
  & \text{SCMD}(\mathcal{M}^{1,a}, \mathcal{M}^{1,b}; (x, y) )\\
    = &\|\mu_{\mathcal{N}(0,\,1)} -\mu_{\mathcal{N}(0,\,1)} \|_{\mathcal{H}} + \|\mu_{\mathcal{N}(a x,\,1)} -\mu_{\mathcal{N}(b x,\,1)} \|_{\mathcal{H}}\\
    =&  \left(2\sqrt{\frac{\sigma^2}{\sigma^2+2}} \left(1-\exp\left(-\frac{(a-b)^2 x^2}{2(\sigma^2+2)}\right)\right)\right)^{1/2}.
\end{align*}
This result shows that SCMD is sensitive to changes in the functional relationship, even when the causal structure remains unchanged, a limitation of SID.

\begin{figure}[t]
    \centering
    \includegraphics[width=0.5\columnwidth]{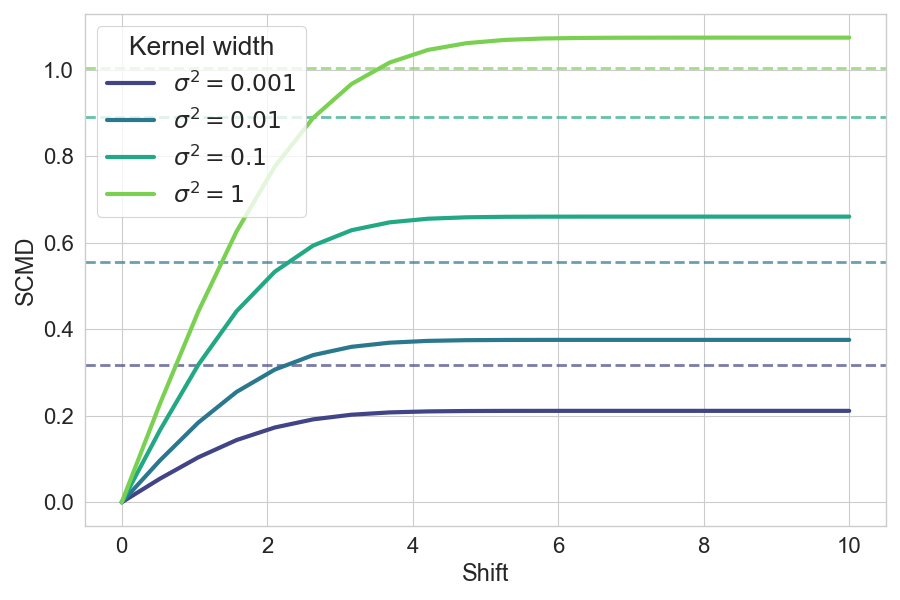}
    \caption{Comparison between $\mathrm{SCMD}(\mathcal{M}^{1,a},\mathcal{M}^{2,a})$ (dotted line)
    and $\mathrm{SCMD}(\mathcal{M}^{1,a},\mathcal{M}^{1,a+\text{shift}})$ (solid line), plotted as a function of the shift parameter. Colors correspond to different values of $\sigma^2$.}
    \label{fig:SCMDShift}
\end{figure}

\paragraph{Case 2: Structural Differences (Different Graphs, Same Joint Distribution)
}  
Now consider comparing $\mathcal{M}^{1,a}$ and $\mathcal{M}^{2,a}$. Here, the observational distribution is identical, but the causal direction is reversed.
MMD between the joint distributions is zero, as $P^{1,a} = P^{2,a}$.  SID between $\mathcal{G}_1$ and $\mathcal{G}_2$  is 2, reflecting the structural difference. SCMD also detects this structural difference:
\begin{align*}
   &\text{SCMD}(\mathcal{M}^{1,a}, \mathcal{M}^{2,a}; (x, y) )
    = \|\mu_{\mathcal{N}(a x,\,1)} - \mu_{\mathcal{N}\!\left(0,\,1+a^2\right)}\|_{\mathcal{H}} \\
    & \hspace{0.5cm} + \|\mu_{\mathcal{N}(0,\,1)} - \mu_{ \mathcal{N}\!\left(\tfrac{a}{1+a^2}y,\,\tfrac{1}{1+a^2}\right)}\|_{\mathcal{H}}
    \neq 0.
\end{align*}
The explicit formulae is given in appendix.
This demonstrates that SCMD, like SID, captures structural differences, but unlike MMD, it does so even when the observational distribution is unchanged. 

\paragraph{Discussion}
Figure~\ref{fig:SCMDShift}  illustrates the  behavior of SCMD in both cases, as a function of the shift parameter and the kernel bandwidth  $\sigma^2$. 
The relative magnitude of distributional shifts and structural differences depends on $\sigma^2$. For large $\sigma^2$, SCMD focuses on global structural differences, while for small $\sigma^2$, it becomes more sensitive to local distributional shifts. This flexibility allows SCMD to adapt to different analysis goals.

Together, these two cases demonstrate how SCMD interpolates between MMD and SID: it reacts to parametric changes within a fixed graph, like MMD, and captures structural differences that leave the observational distribution unchanged, like SID. Unlike both, it provides a unified metric for comparing SCMs in terms of their interventional behavior, making it a powerful tool for causal model comparison.

\section{Estimation and Guarantees} \label{sec:est}
Let $\mathcal{D}^1 = (v_{1,j}^{(n)})_{1\leq j \leq d ; 1\leq n \leq N}$ and  $\mathcal{D}^2 = (v_{2,j}^{(n)})_{1\leq j \leq d ; 1\leq n \leq N}$  be two samples, generated respectively from two SCMs $\mathcal{M}^1$ and $\mathcal{M}^2$ respectively. 
Throughout this part, we assume that the underlying causal graphs $\mathcal{G}^1$ and $\mathcal{G}^2$ are known. 

\subsection{Estimator Construction}
The SCMD introduced in Eq. \eqref{SCMD} is defined as a sum of distances between kernel-based interventional embeddings for all pairs of variables. Each embedding corresponds to a marginal, conditional, or interventional distribution, as specified in Section~\ref{sec:background}. We now describe an estimator of each embedding, and thus of the SCMD. 

For an effect of $V_i$ on $V_j$ computed with a given adjustment set $\mathbf{Z}$, let $\widetilde{\mathcal{H}}$ be a vector-valued RKHS of functions  $\mathcal{V}_i \rightarrow \mathcal{H}_{\mathcal{V}_j}$. 
The estimator of the (conditional/interventional) embedding adapts to the causal structure, similarly to the embedding defined in Eq. \eqref{eq:cases}: for a regularization parameter $\lambda \geq 0$, for $v_i \in \mathcal{V}_i$, 
\begin{equation*}
  \widehat{\mu}^{\lambda,N}_{P_{\text{do}(V_i = v_i)}(V_j)}(\cdot)
  =  \widehat{\Omega}^{\lambda,N}_{v_i} \,\bm{k}_{V_j} (\cdot),
  \end{equation*}
  where $\widehat{\Omega}_{v_i}^{\lambda,N}$ is defined as:
\begin{align}
\widehat{\Omega}^{\lambda,N}_{v_i} \!\!:= &\frac{1}{N} \mathbf{1}_N^\top 
    && \hspace{-0.1cm} \text{if } V_i \not\to V_j, \label{kw:sim}\\
    &\bm{k}^\top_{V_i}(v_i)  \bm{W}^\lambda_{V_i}
    && \text{if } \bm{Z} = \emptyset, \label{kw:cond}\\
    &\frac{1}{N}  \sum_{n=1}^N \bm{k}^\top_{V_i, \bm{Z}}(v_i,\bm{z}^{(n)}) \bm{W}^\lambda_{V_i, \bm{Z}}
    && \text{elsewhere.} \label{kw:int}
\end{align}

This estimator minimizes the regularized empirical loss 
\begin{align*}
\tilde{\mathcal{E}}_{N, \lambda}(\mu) = \frac{1}{N} \sum_{n=1}^N [\|k_{\mathcal{V}_j}(v_j^{(n)}, \cdot) - \mu(v_i^{(n)})\|^2_{\mathcal{H}_{V_j}} + \lambda \| \mu \|_{\widetilde{\mathcal{H}}},
\end{align*}
as introduced in \cite{park2020measure}, seen as a surrogate loss for the conditional mean embedding estimation. 



\subsection{Theoretical Guarantees}
The following theorem establishes the consistency of the proposed estimator.

\begin{theorem}
Suppose that $k_{\mathcal{V}_i}, k_{\mathcal{V}_j}$ and $k_{\mathcal{Z}}$ are bounded kernels, and that the operator-valued kernel $k_{\mathcal{V}\mathcal{Z}}$ is $C_0$ universal. Let the regularization parameter $\lambda$ decay to 0 at a slower rate than $\mathcal{O}(n^{-1/2})$. 
Then $\widehat{\mu}^{\lambda,N}_{P_{\text{do}(V_i = v_i)}(V_j)}$ is universally consistent.

Assume further that $\mu_{P_{\text{do}(V_i = v_i)}(V_j)}(\cdot) \in \widetilde{\mathcal{H}}$. Then, with probability $1-\delta$, 
$$\tilde{\mathcal{E}}_{N, \lambda}(\widehat{\mu}^{\lambda,N}_{P_{\text{do}(V_i = v_i)}(V_j)}) - \tilde{\mathcal{E}}_{ N, \lambda}(\mu_{P_{\text{do}(V_i = v_i)}(V_j)}) = \mathcal{O}_P (N^{-1/4}).$$
\end{theorem}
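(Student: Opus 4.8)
The plan is to treat the three cases in \eqref{kw:sim}--\eqref{kw:int} separately, since the estimator $\widehat{\mu}^{\lambda,N}_{P_{\text{do}(V_i=v_i)}(V_j)}$ adapts to the causal structure just as the population embedding does. The cases $V_i\not\to V_j$ (marginal) and $\bm{Z}=\emptyset$ (pure conditional) reduce to known results: the marginal kernel mean embedding estimator is consistent at rate $\mathcal{O}(N^{-1/2})$ by \citet{gretton2012kernel}, and the regularized conditional mean embedding estimator is universally consistent with the stated excess-risk rate $\mathcal{O}_P(N^{-1/4})$ by \citet{park2020measure}, under exactly the hypotheses assumed here (bounded $k_{\mathcal{V}_i},k_{\mathcal{V}_j}$, $C_0$-universal operator-valued kernel, $\lambda\to 0$ slower than $N^{-1/2}$, and $\mu_{P(V_j\mid V_i=v_i)}\in\widetilde{\mathcal{H}}$). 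So the substantive work is the third case, the adjustment estimator in \eqref{kw:int}.

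For the interventional case, the key observation is that $\bm{k}^\top_{V_i,\bm{Z}}(v_i,\bm{z})=\bm{k}^\top_{V_i}(v_i)\odot\bm{k}^\top_{\bm{Z}}(\bm{z})$ is exactly the evaluation vector of the conditional mean embedding regressed on the \emph{joint} variable $(V_i,\bm{Z})$ with the product kernel $k_{\mathcal{V}\mathcal{Z}}:=k_{\mathcal{V}_i}\cdot k_{\mathcal{Z}}$. Hence the inner term $\tfrac1N\sum_n \bm{k}^\top_{V_i,\bm{Z}}(v_i,\bm{z}^{(n)})\bm{W}^\lambda_{V_i,\bm{Z}}\bm{k}_{V_j}(\cdot)$ is the empirical average over the observed $\bm{z}^{(n)}$ of the \citet{park2020measure} conditional mean embedding estimator $\widehat{\mu}^{\lambda,N}_{P(V_j\mid V_i=v_i,\bm{Z}=\bm{z}^{(n)})}$. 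The plan is therefore to write
$$\widehat{\mu}^{\lambda,N}_{P_{\text{do}(V_i=v_i)}(V_j)}(\cdot)-\mu_{P_{\text{do}(V_i=v_i)}(V_j)}(\cdot)=\underbrace{\frac1N\sum_{n=1}^N\Bigl(\widehat{\mu}^{\lambda,N}_{P(V_j\mid v_i,\bm{z}^{(n)})}-\mu_{P(V_j\mid v_i,\bm{z}^{(n)})}\Bigr)}_{(\mathrm{I})}+\underbrace{\frac1N\sum_{n=1}^N\mu_{P(V_j\mid v_i,\bm{z}^{(n)})}-\mathbb{E}_{\bm{Z}}[\mu_{P(V_j\mid v_i,\bm{Z})}]}_{(\mathrm{II})},$$
and bound the two pieces. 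Term $(\mathrm{II})$ is a Monte Carlo error: the $\bm{z}^{(n)}$ are drawn i.i.d.\ from $P(\bm{Z})$, each summand $\mu_{P(V_j\mid v_i,\bm{z}^{(n)})}$ is a bounded element of $\mathcal{H}_{\mathcal{V}_j}$ (by boundedness of $k_{\mathcal{V}_j}$), so a Hilbert-space Hoeffding/Bernstein inequality gives $\|(\mathrm{II})\|_{\mathcal{H}_{\mathcal{V}_j}}=\mathcal{O}_P(N^{-1/2})$. Term $(\mathrm{I})$ is controlled by the uniform-in-conditioning-value consistency of the Park--Muandet estimator applied to the regression on $(V_i,\bm{Z})$: under the $C_0$-universality of $k_{\mathcal{V}\mathcal{Z}}$ and the slow decay of $\lambda$, one gets $\sup_{\bm{z}}\|\widehat{\mu}^{\lambda,N}_{P(V_j\mid v_i,\bm{z})}-\mu_{P(V_j\mid v_i,\bm{z})}\|_{\mathcal{H}_{\mathcal{V}_j}}\to 0$, hence $\|(\mathrm{I})\|_{\mathcal{H}_{\mathcal{V}_j}}\to 0$, which gives universal consistency; combining with the $\mathcal{O}_P(N^{-1/4})$ excess-risk bound of \citet{park2020measure} and the fact that $\mathcal{O}(N^{-1/2})$ is dominated by $\mathcal{O}(N^{-1/4})$ yields the stated rate for $\tilde{\mathcal{E}}_{N,\lambda}$.

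For the excess-risk statement specifically, I would invoke the surrogate-loss decomposition of \citet{park2020measure}: $\tilde{\mathcal{E}}_{N,\lambda}(\widehat{\mu})-\tilde{\mathcal{E}}_{N,\lambda}(\mu^\star)$ splits into an estimation term handled by a Rademacher-complexity (or concentration) argument for the vector-valued ridge regression and an approximation term that vanishes because we assume $\mu_{P_{\text{do}(V_i=v_i)}(V_j)}\in\widetilde{\mathcal{H}}$; with $\lambda\asymp N^{-\alpha}$ for $\alpha<1/2$ the optimal balance gives $\mathcal{O}_P(N^{-1/4})$. The main obstacle I anticipate is making the reduction in term $(\mathrm{I})$ fully rigorous: the Park--Muandet guarantees are stated for a fixed conditioning value, whereas here I need control that is \emph{uniform over} $\bm{z}$ in the support of $P(\bm{Z})$ (or at least an averaged-over-$\bm{z}$ version), and I must check that the $C_0$-universality hypothesis on the product kernel $k_{\mathcal{V}\mathcal{Z}}$ is exactly what licenses this — plus handle the subtlety that in the estimator the same sample $(v_i^{(n)},\bm{z}^{(n)})_n$ is used both to build $\bm{W}^\lambda_{V_i,\bm{Z}}$ and to average over $\bm{z}^{(n)}$, so terms $(\mathrm{I})$ and $(\mathrm{II})$ are not independent and the concentration arguments must be arranged to avoid circularity (e.g.\ by bounding $(\mathrm{I})$ in sup-norm over $\bm{z}$ so that the dependence on the empirical $\bm{z}^{(n)}$ becomes harmless).
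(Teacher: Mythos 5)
Your proposal follows essentially the same route as the paper's own proof: both reduce the interventional case to the \citet{park2020measure} conditional mean embedding guarantees by viewing the estimator as an empirical average over the adjustment variable $\bm{Z}$, and both rely on uniform-in-$\bm{z}$ convergence (via universality of the product kernel and compactness of the support of $\bm{Z}$) to pass that consistency through the average. Your explicit $(\mathrm{I})+(\mathrm{II})$ decomposition, the Hilbert-space concentration bound for the Monte Carlo term, and the flagged sample-reuse dependence between $\bm{W}^\lambda_{V_i,\bm{Z}}$ and the averaged $\bm{z}^{(n)}$ are all details the paper's very terse proof leaves implicit, so your version is, if anything, the more careful one.
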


Remark that this rate of convergence is slower than the $\mathcal{O}(N^{-1/2})$ rate achieved for marginal embeddings, due to the additional complexity of conditional and interventional estimation.

The estimator of MIMD is then given by:
\begin{align*}
    &\widehat{\mathrm{MIMD}}\left({P}_{\mathrm{do}(V_i = v_i)}^1({V_j}),{P}^2_{\mathrm{do}(V_i = v_i)}({V_j})\right)\\
    = &\left( [\widehat{\Omega}^{\lambda, N,1}_{v_i, \mathbf{z}}]^\top \mathbf{K}^1  \widehat{\Omega}^{\lambda, N,1}_{v_i, \mathbf{z}} 
    + [\widehat{\Omega}^{\lambda, N,2}_{v_i, \mathbf{z}}]^\top \mathbf{K}^2  \widehat{\Omega}^{\lambda, N,2}_{v_i, \mathbf{z}} \right.\\
    & \left. -2 [\widehat{\Omega}^{\lambda, N,1}_{v_i, \mathbf{z}}]^\top \mathbf{K}^{1,2}  \widehat{\Omega}^{\lambda, N,2}_{v_i, \mathbf{z}}\right)^{1/2},
\end{align*}
where $\mathbf{K}^1, \mathbf{K}^2$, and $\mathbf{K}^{1,2}$ are the Gram matrices of the observations from $V_j$ in $\mathcal{D}^1, \mathcal{D}^2$, and between $\mathcal{D}^1$ and $\mathcal{D}^2$, respectively.

Then, since SCMD aggregates the distances between kernel-based interventional embeddings over all pairs of variables from two distinct SCMs $\mathcal{M}^1$ and $\mathcal{M}^2$, its plug-in estimator is also universally consistent.

\begin{algorithm}[t]
\caption{\texttt{Omega}}
\KwIn{$\mathcal{G}$,  $\mathcal D$, $(i,j)$ and $v_i$.  }
\KwOut{The vector $ \widehat{\Omega}^{\lambda,N}_{v_i}$}
\uIf{there is no path from $V_i$ to $V_j$ in $\mathcal{G}$}{
    \Return{Eq. \eqref{kw:sim}}\;
}
\uElse{
    \uIf{$\mathrm{Pa}(V_i)=\emptyset$}{
        \Return{Eq. \eqref{kw:cond}}\;
    }
    \uElse{
    $\mathbf{Z} = \mathrm{Pa}(V_i)$\;
        \Return{Eq. \eqref{kw:int}}\;
    }
}
\label{algo:kw}
\end{algorithm}

\subsection{SCMD Algorithm}
 The overall algorithm relies on the computation of $\widehat{\Omega}^{\lambda,N}_{v_i}$ (Algorithm~\ref{algo:kw}) and MIMD (Algorithm~\ref{algo:mimd}) computed over all pairs of variables $(i,j)$. 
All those algorithms suppose given a regularization parameter $\lambda \geq 0$, a kernel $k$ with bandwidth $\sigma^2$. 
The function $\texttt{Kernel}$  in  Algorithm \ref{algo:mimd}   computes the Gram matrix of all the observations of $\mathcal{D}$ from the variable $j$.



\begin{algorithm}[t]
\caption{\texttt{MIMD}}
\KwIn{ $\mathcal{G}^1$, $\mathcal{G}^2$,  $\mathcal D^1$, $\mathcal D^2$, $(i,j)$ and $v_i$.  }
\KwOut{$\widehat{\mathrm{MIMD}}\left({P}_{\mathrm{do}(V_i = v_i)}^1({V_j}),{P}^2_{\mathrm{do}(V_i = v_i)}({V_j})\right)$}
$\Omega^1$ $\leftarrow$ \texttt{Omega}($\mathcal{G}^1$, $D^1$, $i$, $j$, $v_i$)\;
$\Omega^2$ $\leftarrow$ \texttt{Omega}($\mathcal{G}^2$, $D^2$,  $i$, $j$, $v_i$)\;
$\bm K_{V_j}^1$ $\leftarrow$ \texttt{Kernel}($\mathcal{D}^1$, $j$)\;
$\bm K_{V_j}^2$ $\leftarrow$ \texttt{Kernel}($\mathcal{D}^2$, $j$)\;
$\bm K_{V_j}^{1,2}$ $\leftarrow$ \texttt{Kernel}($\mathcal{D}^1$, $\mathcal{D}^2$, $j$)\;
\Return{$(\Omega^{1\top} \bm K^1_{V_j} \Omega^1 - 2 \Omega^{1\top} \bm K_{V_j}^{1,2} \Omega^2 + \Omega^{2\top} \bm K^2_{V_j} \Omega^2)^{1/2}$}
\label{algo:mimd}
\end{algorithm}

The algorithmic complexity to compute SCMD over two datasets of $d$ variables observed on $N$ observations is  $\mathcal{O}(d^3N^3)$. Details are provided in the Appendix.

\section{Numerical Experiments} \label{sec:exp}
We evaluate the proposed SCMD metric through two sets of experiments: first, on a  synthetic dataset generated from the illustrative example in Section \ref{sec:ex}, to illustrate its behavior and build intuition; second, on a real-world benchmark composed of nine datasets collected under heterogeneous environments. 

Throughout all experiments, we use a Gaussian kernel, which is characteristic and only depends on the bandwidth parameter $\sigma^2$. We assume that the causal graph is known, allowing us to identify the appropriate adjustment set for each intervention.

Experiments are run on a standard desktop computer. Code for these experiments is included in the Supplementary Material for anonymity and will be made publicly available upon acceptance.

\begin{table}[t]
\centering
\caption{Results on the simulated example.  
Case 1: same causal graph but different joint distributions;  
Case 2: different causal graphs but same joint distribution. 
Reported values are means (standard deviation) over $50$ repetitions ($\widehat{\mathrm{E\text{-}SCMD}}$ over $20$ repetitions).}
\begin{tabular}{lcc}
\toprule
 & Case 1 & Case 2 \\ 
\midrule
SID & $0$ & $2$ \\ 
\midrule
MMD & $0.0515$ & $0$ \\
$\widehat{\mathrm{MMD}}_{\mathrm{kernel}}$ & $0.1215(\pm 0.0013)$ & $0.0140(\pm 0.0016)$\\
\midrule
SCMD  & $0.5177$ & $0.8921$ \\
$\widehat{\mathrm{SCMD}}_{\mathrm{plug-in}}$ & $0.5248(\pm 0.0047)$ & $0.8929(\pm 0.0038)$ \\
$\widehat{\mathrm{SCMD}}_{\mathrm{kernel}}$ & $0.5360(\pm 0.0173)$ & $1.0046(\pm 0.0169)$ \\ 
\midrule
$\widehat{\mathrm{P\text{-}SCMD}}_{Y}$ & $0.5237(\pm 0.0172)$ & $0.4062(\pm 0.0091)$ \\
$\widehat{\mathrm{P\text{-}SCMD}}_{X}$ & $0.0122(\pm 0.0033)$ & $0.5984(\pm 0.0145)$ \\
\midrule
$\widehat{\mathrm{E\text{-}SCMD}}$ & $0.5819(\pm 0.0060)$ & $0.9473(\pm 0.0059)$ \\
\bottomrule
\end{tabular}
\label{tab:res}
\end{table}
\subsection{Illustrative Example}

\paragraph{Data generating process}
We use the setup described in Section~\ref{sec:ex} with $N=10^4$ samples, comparing the two scenarios.
The parameters are set to $a=3$, $b=5$, and the intervention is fixed to $x=y=1$.
We use  $\sigma^2=0.1$ and $\lambda=0.5$ in the RKHS-valued regression. 
A sensitivity analysis with respect to these hyperparameters is provided in  Appendix.

\paragraph{Estimators}
For each case, we compute:
(i) the theoretical value of the SID between the causal graphs;
(ii) the theoretical value of the MMD between the joint distributions, together with its biased V-statistic estimator; 
(iii) the theoretical value of SCMD; together with its plug-in estimator, obtained by substituting estimated means and variances (possibly conditional) into the closed-form expression; and the fully kernel-based estimator proposed in this paper, (iv) its extension to P-SCMD and (v) E-SCMD (computed uniformly over quantiles from 0.2 to 0.8 level).

\begin{figure}[t]
    \centering
    \includegraphics[width=0.4\columnwidth, trim = 0 1.2cm 0 1.2cm, clip=TRUE]{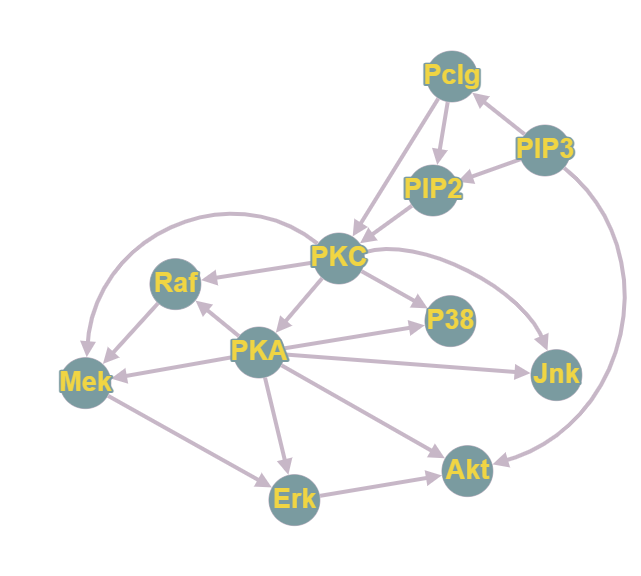}
    \caption{Causal graph of the real-world dataset \citep{sachs2005causal}. Nodes correspond to proteins or phospholipids, while edges were obtained through causal discovery and validated by experts.}
    \label{fig:Sachs}
\end{figure}

\begin{figure*}[t]
    \centering
    \includegraphics[width=0.85\textwidth, trim = 0 1.5cm 0 0.9cm, clip = TRUE]{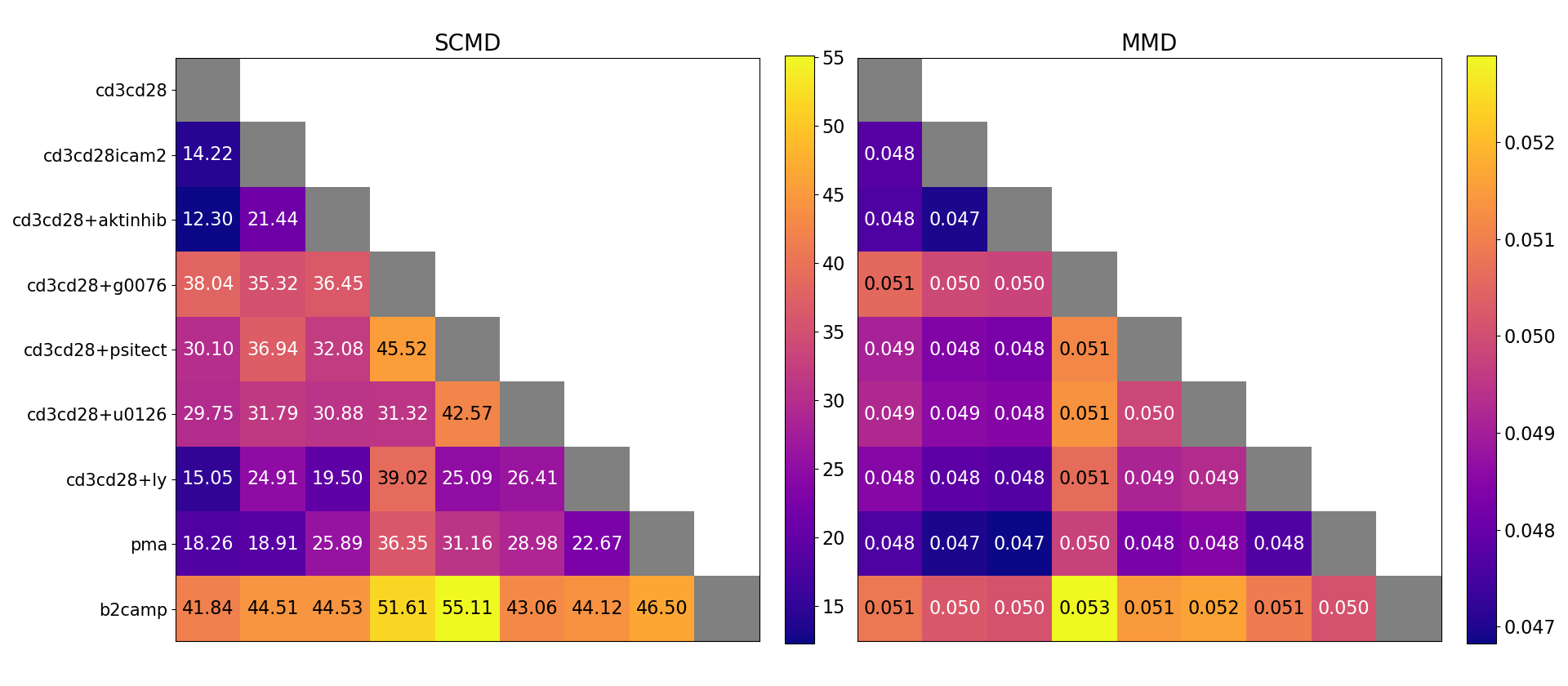}
    \caption{Heatmaps of pairwise environment distances (lower-triangular part shown). Left: SCMD, capturing both structural and distributional discrepancies. Right: MMD, based only on distributional differences.}
    \label{fig:SCMD&MMD2}
\end{figure*}

\paragraph{Result analysis}
The average results over 50 repetitions are presented in Table~\ref{tab:res}. As discussed in Section~\ref{sec:ex}, MMD and SID each fail to detect differences in one of the two scenarios. 

SCMD successfully quantifies both types of discrepancies.
In Case 1, SCMD reports a  distance of 0.5177, reflecting the parametric mismatch between models.
In Case 2, SCMD yields an even larger value (0.8921), indicating a fundamental structural divergence.
The plug-in estimator of SCMD exhibits low variance, as expected in this controlled parametric setting.
The kernel-based estimator shows slightly higher variance but remains consistent with theoretical values, validating its robustness in nonparametric contexts.

When a prediction task is of interest, P-SCMD is more relevant, as it  disentangles the differences with respect to the nodes. For instance, when predicting $X$, the two datasets are almost indistinguishable in Case~1 (where only $Y$ differs), whereas both prediction tasks are challenging in Case~2 (where the causal structure differs). Notably, SCMD can be interpreted as the sum of the P-SCMD values over all variables.

To avoid manually selecting intervention values for each variable, E-SCMD computes an expectation over interventions. 
E-SCMD (0.5819 vs. 0.9473) corroborates SCMD trends while eliminating dependency on manual intervention choices.


\subsection{Real-world dataset}
We analyze the dataset introduced by \citet{sachs2005causal}, consisting of measurements of 11 phosphorylated proteins and phospholipids across 7,466 immune system cells. The data were collected under 9 molecular interventions using multiparameter flow cytometry. The causal relationships among the proteins, obtained via causal discovery and validated by experts, are shown in Figure~\ref{fig:Sachs}.  
Each intervention corresponds to a distinct environment, although the interventions do not directly target the 11 observed variables (see Figure 2 in \citealp{sachs2005causal}). We illustrate how SCMD can quantify the pairwise distances between these 9 environments. In this setting, the causal graph remains fixed, and we therefore focus on changes in the interventional distributions. For comparison, we additionally compute the MMD between the joint distributions of each pair of environments. We use $\sigma^2 = 10$, $\lambda = 1$, and $\bm{v}$ is set to the mean of each variable in each environment. 

The heatmaps in Figure \ref{fig:SCMD&MMD2} show the pairwise distances between environments, with SCMD on the left and MMD on the right. Environment labels correspond to the interventions applied. Only the lower-triangular part of the matrices is displayed, as the distances are symmetric.  
For SCMD, all environments except the last two involve interventions on $CD3$ and $CD28$, which are ancestors of the first variables in Figure \ref{fig:Sachs}. Environments 'cd3cd28' and 'cd3cd28+aktinhib' are highly similar, since the 'aktinhib' intervention targets \texttt{Akt}, located at the end of the causal order, causing only minor changes in the interventional distributions. By contrast, environments 'cd3cd28+psitect' and 'cd3cd28+u0126' differ substantially: the interventions 'psitect' and 'u0126' target \texttt{PIP2} and \texttt{Mek}, near the beginning of the causal order, producing large distributional shifts. Additionally, the SCM associated with the “b2camp” intervention shows marked differences from all others, likely due to stronger intervention effects and unobserved links between variables \citep{10.1189/jlb.2RU1116-474R}.  
Comparing SCMD with MMD highlights the added granularity of SCMD. While SCMD reflects the effect of interventions depending on their position in the causal structure, MMD fails to capture this: many pairs of environments appear equally distant under MMD, even when SCMD detects substantial differences.

\section{Conclusion and discussion} \label{sec:conc}
This work introduced a kernel-based framework to quantify distances between Structural Causal Models (SCMs), enabling rigorous comparison of interventional distributions. Our main contribution is a theoretically grounded metric that accounts for both structural and parametric differences, with estimation procedure guarantees on identifiability and robustness.

Our approach assumes known causal graphs, which may not hold in practice. Discovering it in a first step would induce a bias to take into account in the procedure. Estimation also relies on conditioning arguments within the observed data support; when this is not feasible (e.g., for extreme values of interventions), results may become unreliable, highlighting the need for robust extrapolation or regularization techniques.

This distance can be used to quantify the difficulty of a trained model to predict on a distribution out-of-domain. Future research could explore whether, as with MMD, our distance can be leveraged to design a test of equality between SCMs. \\
More broadly, it offers a  way to characterize  distributional shift by measuring how far causal mechanisms deviate from those seen during training. This perspective opens new directions for adaptive strategies: for instance, selecting models or regularization schemes according to  measured difficulty, or designing targeted interventions when large structural mismatches are detected. Ultimately, our metric could thus serve as a foundation for a more systematic approach to robustness in out-of-distribution generalization.

\bibliography{biblio}


\appendix
\section{Theoretical details}
In this section, we provide proofs of theoretical results. 
\subsection{Properties of SCMD: proofs}
\begin{proposition}[SCMD is a distance]
Let $\mathcal{M}^1,\mathcal{M}^2$ two SCMs and $(\bm{v}^1, \bm{v}^2)$ two vectors of intervention values. If the kernel $k_{\mathcal{V}}$ is characteristic, then $\mathrm{SCMD}$ is a distance: 
    \begin{itemize}
        \item  \textbf{Separation:} $\mathrm{SCMD}(\mathcal{M}^1,\mathcal{M}^2 ; \bm{v}^1, \bm{v}^2) = 0 \Leftrightarrow \mathcal{M}^1 = \mathcal{M}^2$ almost surely;
        \item  \textbf{Non-negativity:}  $\mathrm{SCMD}(\mathcal{M}^1,\mathcal{M}^2; \bm{v}^1, \bm{v}^2) \geq 0$;
        \item \textbf{Symmetry:} $$\mathrm{SCMD}(\mathcal{M}^1,\mathcal{M}^2; \bm{v}^1, \bm{v}^2) = \mathrm{SCMD}(\mathcal{M}^2,\mathcal{M}^1; \bm{v}^2, \bm{v}^1);$$
        \item \textbf{Triangle inequality:} for any third SCM $\mathcal{M}_3$,  \begin{align*}
            \mathrm{SCMD}(\mathcal{M}^1,\mathcal{M}^2; \bm{v}^1, \bm{v}^2)
            \leq \mathrm{SCMD}(\mathcal{M}^1,\mathcal{M}^3; \bm{v}^1, \bm{v}^3) +\mathrm{SCMD}(\mathcal{M}^3,\mathcal{M}^2; \bm{v}^3, \bm{v}^2).
        \end{align*}
    \end{itemize}
\end{proposition}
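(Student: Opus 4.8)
\emph{Proof plan.}
The idea is that $\mathrm{SCMD}(\mathcal M^1,\mathcal M^2;\bm v^1,\bm v^2)$ is a finite sum, over pairs $(i,j)$, of the terms $\|\mu_{P^1_{\mathrm{do}(V_i=v_i^1)}(V_j)}(\cdot)-\mu_{P^2_{\mathrm{do}(V_i=v_i^2)}(V_j)}(\cdot)\|_{\mathcal H_{\mathcal V_j}}$, each of which is the norm of a difference of two elements of a fixed Hilbert space; so three of the four axioms transfer from the RKHS norm to $\mathrm{SCMD}$ term by term. \textbf{Non-negativity} is then immediate, a sum of norms being non-negative. \textbf{Symmetry} holds because $\|a-b\|=\|b-a\|$ in any normed space, and exchanging $(\mathcal M^1,\bm v^1)$ with $(\mathcal M^2,\bm v^2)$ sends each $(i,j)$-summand to itself. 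For the \textbf{triangle inequality}, for each $(i,j)$ I would insert the intermediate embedding $\mu_{P^3_{\mathrm{do}(V_i=v_i^3)}(V_j)}(\cdot)$ and apply the triangle inequality in $\mathcal H_{\mathcal V_j}$, then sum over $(i,j)$ and split the resulting bound into the two $\mathrm{SCMD}$ sums:
\[
\sum_{i,j}\|\mu^1_{ij}-\mu^2_{ij}\| \;\le\; \sum_{i,j}\|\mu^1_{ij}-\mu^3_{ij}\| \;+\; \sum_{i,j}\|\mu^3_{ij}-\mu^2_{ij}\|,
\]
where $\mu^k_{ij}:=\mu_{P^k_{\mathrm{do}(V_i=v_i^k)}(V_j)}(\cdot)$.

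\textbf{Separation} is where the real work lies. The direction $\mathcal M^1=\mathcal M^2\Rightarrow\mathrm{SCMD}=0$ is routine: identical SCMs induce, for every $(i,j)$, identical interventional distributions $P_{\mathrm{do}(V_i=v_i)}(V_j)$ — in each of the three cases of Eq.~\eqref{eq:cases} — hence identical embeddings and vanishing MIMD terms. For the converse, $\mathrm{SCMD}=0$ forces every summand to vanish, i.e.\ $\mu^1_{ij}=\mu^2_{ij}$ for all $(i,j)$; since $k_{\mathcal V}$ is characteristic the mean-embedding map is injective on probability measures, so $P^1_{\mathrm{do}(V_i=v_i^1)}(V_j)=P^2_{\mathrm{do}(V_i=v_i^2)}(V_j)$ for all $(i,j)$. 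One then has to upgrade this equality of all single-intervention pairwise marginals to equality of the two SCMs. I would argue by induction along the (common) causal order: the pairs with $V_i\not\to V_j$ already deliver $P^1(V_j)=P^2(V_j)$ for all $j$; then, descending the causal order and using the conditional and adjustment-set cases of Eq.~\eqref{eq:cases} together with causal sufficiency and the known graph (and the fact that the adjustment set is taken to be $\mathrm{Pa}(V_i)$), one recovers the conditional mechanisms $P(V_j\mid\mathrm{Pa}(V_j))$, and therefore every interventional distribution, which is the sense in which "$\mathcal M^1=\mathcal M^2$ almost surely" should be read here (equality of the structural functions and noise laws up to the null sets on which they are not identified).

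The step I expect to be the main obstacle is precisely this last upgrade: in general a collection of single-variable interventional marginals need not pin down a full SCM, so the argument must genuinely exploit causal sufficiency, the parent-adjustment convention, and the agreement of the two graphs (which can be extracted from the marginal-case equalities, and is also consistent with the SID relation of the next proposition). I would also need to be careful about the intervention values: specialising the equalities above to appropriate pairs $(i,j)$ constrains $\bm v^1$ and $\bm v^2$ to be compatible, so that the two models are compared "at the same point"; for a single, possibly adversarial choice of $(\bm v^1,\bm v^2)$ one may need a genericity remark (or to read separation for the matched-value or expected variant E-SCMD), and I would state the identification convention explicitly so that the "$\Leftarrow$'' and "$\Rightarrow$'' directions refer to the same notion of SCM equality.
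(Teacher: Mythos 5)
Your treatment of non-negativity, symmetry and the triangle inequality is exactly the paper's argument (the paper phrases it as ``each MIMD term is a metric,'' you unfold this to the RKHS norm; same content). The forward direction of separation also matches. The divergence is in the reverse direction of separation, and it matters. After using characteristicness to conclude $P^1_{\mathrm{do}(V_i=v_i)}(V_j)=P^2_{\mathrm{do}(V_i=v_i)}(V_j)$ for every pair $(i,j)$ (the paper cites Theorem~5.2 of \citet{park2020measure} for this, and quietly adds an absolute-continuity assumption), the paper does \emph{not} attempt the upgrade you describe. It instead declares that this collection of equalities means $\mathcal{M}^1$ and $\mathcal{M}^2$ are \emph{interventionally equivalent} in the sense of \citet[Definition~4.3]{bongers2021foundations}, and states explicitly that ``$\mathcal{M}^1=\mathcal{M}^2$'' is an abuse of notation for that weaker equivalence. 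So the real work you anticipated --- recovering the structural functions and noise laws by induction along the causal order from the single-intervention marginals --- is simply not performed, and for good reason: as you yourself suspect, it cannot be done. SCMD only constrains each interventional marginal $P_{\mathrm{do}(V_i=v_i)}(V_j)$ at one intervention value $v_i$, and even the full family of single-node interventional distributions at all values does not identify an SCM up to its structural functions and exogenous laws. Your proposed induction would therefore stall at the first step where you try to recover $P(V_j\mid \mathrm{Pa}(V_j))$ for all parent configurations from a single evaluation point. Your remark about the intervention values is also well taken and in fact exposes a looseness in the paper's own proof: the definition allows $\bm{v}^1\neq\bm{v}^2$, in which case the vanishing of each MIMD term only equates $P^1_{\mathrm{do}(V_i=v_i^1)}(V_j)$ with $P^2_{\mathrm{do}(V_i=v_i^2)}(V_j)$ at \emph{different} points; the paper's proof silently sets $v_i^1=v_i^2$. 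In short: your plan is sound and complete for three axioms and for one implication of the fourth, but the route you propose for the remaining implication proves a stronger statement than the paper actually establishes, and that stronger statement is not derivable from the hypotheses; the fix is the paper's, namely to read the separation axiom as separation up to interventional equivalence (at the specified intervention values).
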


\begin{proof}
SCMD is a sum of MIMD terms, which are metrics. Then, non-negativity, symmetry and triangle inequality directly follows because MIMD is non-negative, symmetric and satisfies the triangle inequality. 
Thus, we just have to prove the separation.

    \textbf{Separation.} 
     $(\Leftarrow)$ If the two structural causal models are identical, then all discrepancies between their interventional distributions necessarily vanish. Thus, $\mathrm{SCMD}(\mathcal{M}^1,\mathcal{M}^2 ; \bm{v}^1, \bm{v}^2) = 0$. 
     
     $(\Rightarrow)$ We assume that $\mathrm{SCMD}(\mathcal{M}^1,\mathcal{M}^2 ; \bm{v}^1, \bm{v}^2) = 0$. Since the SCMD is defined as the sum of MIMDs that are positive, it leads to, for every pair $(i,j)$,
     $\mathrm{MIMD}(P^1_{\mathrm{do}(V_i=v_i)}(V_j), P^2_{\mathrm{do}(V_i = v_i)}(V_j)) = 0$. 
     From  \citet[Theorem 5.2]{park2020measure}, it means that 
     $$P^1_{\mathrm{do}(V_i=v_i)}(B) = P^2_{\mathrm{do}(V_i=v_i)}(B)$$
     for all $B$, because the kernel is characteristic and we assume that each distribution is absolutely continuous with respect to each other. 
     
This means that the two SCMs $\mathcal{M}_1$ and $\mathcal{M}_2$ are interventional equivalent \citep[Definition 4.3]{bongers2021foundations}, which we wrote, by a slight abuse of notations, $\mathcal{M}_1 = \mathcal{M}_2$.
\end{proof}

\begin{proposition}[Bounds and relation to SID]
Let $\mathcal{M}^1,\mathcal{M}^2$ two SCMs and $\bm{v}^1, \bm{v}^2$ two vectors of intervention values. 
\begin{enumerate}
\item \textbf{Boundedness:} if the kernel $k_{\mathcal{V}}$ is bounded, then SCMD is bounded as well:
     if for all $(v_j, v'_j) \in \mathcal{V}_j^2$, $k_{\mathcal{V}_j}(v_j,v'_j)\leq K$ for $K\in \mathbb{R}^+$, then 
 $$\mathrm{SCMD}(\mathcal{M}^1,\mathcal{M}^2 ; \bm{v}^1, \bm{v}^2)\leq \sqrt{2K}d(d-1).$$

\item \textbf{Relation to SID:} 
\begin{align*}
    \mathrm{SCMD}(\mathcal{M}^1,\mathcal{M}^2; \bm{v}^1, \bm{v}^2)=0 \Rightarrow 
     \mathrm{SID}(\mathcal{G}^1,\mathcal{G}^2)= \mathrm{SID}(\mathcal{G}^2,\mathcal{G}^1)=0. 
\end{align*}
\end{enumerate}
\end{proposition}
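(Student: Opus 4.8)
The plan is to prove the two parts of the proposition separately, leveraging the metric structure of MIMD established earlier.

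\textbf{Part 1 (Boundedness).} First I would observe that SCMD is a sum of $d^2$ MIMD terms indexed by pairs $(i,j)$, but the diagonal terms $i=j$ contribute nothing meaningful (intervening on $V_j$ and looking at $V_j$ gives a point mass in both models, so that term vanishes), which leaves at most $d(d-1)$ genuinely nontrivial terms. Then it suffices to bound a single MIMD term. Each such term is $\|\mu_{P^1_{\mathrm{do}}(V_j)} - \mu_{P^2_{\mathrm{do}}(V_j)}\|_{\mathcal{H}_{\mathcal{V}_j}}$; by the triangle inequality in the RKHS this is at most $\|\mu_{P^1_{\mathrm{do}}(V_j)}\| + \|\mu_{P^2_{\mathrm{do}}(V_j)}\|$, and for any distribution $Q$ on $\mathcal{V}_j$ one has $\|\mu_Q\|^2 = \mathbb{E}_{v,v'\sim Q}[k_{\mathcal{V}_j}(v,v')] \leq K$, hence $\|\mu_Q\| \leq \sqrt{K}$. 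This gives a bound of $2\sqrt{K}$ per term, and summing over the $d(d-1)$ terms yields $2\sqrt{K}\,d(d-1)$. I note there is a slight mismatch with the stated bound $\sqrt{2K}\,d(d-1)$: the cleaner argument via $\|\mu_{P^1} - \mu_{P^2}\|^2 = \|\mu_{P^1}\|^2 - 2\langle\mu_{P^1},\mu_{P^2}\rangle + \|\mu_{P^2}\|^2 \leq 2K + 2K$ — no, that also gives $2\sqrt{K}$; the bound $\sqrt{2K}$ would follow if one additionally uses $\langle\mu_{P^1},\mu_{P^2}\rangle = \mathbb{E}[k(v,v')] \geq 0$ for a nonnegative kernel, giving $\|\mu_{P^1}-\mu_{P^2}\|^2 \leq \|\mu_{P^1}\|^2+\|\mu_{P^2}\|^2 \leq 2K$. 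So I would assume nonnegativity of the (Gaussian) kernel to recover the stated constant.

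\textbf{Part 2 (Relation to SID).} Here I would argue contrapositively or directly. Suppose $\mathrm{SCMD}(\mathcal{M}^1,\mathcal{M}^2;\bm{v}^1,\bm{v}^2)=0$. As in the separation proof, each MIMD term vanishes, so for every pair $(i,j)$ the interventional embeddings agree, and since $k_{\mathcal{V}}$ is characteristic this forces $P^1_{\mathrm{do}(V_i=v_i^1)}(V_j) = P^2_{\mathrm{do}(V_i=v_i^2)}(V_j)$. The key step is then to recall the definition of $\mathrm{SID}(\mathcal{G}^1,\mathcal{G}^2)$: it counts pairs $(i,j)$ for which the adjustment set chosen using $\mathcal{G}^1$ yields an incorrect interventional distribution relative to the SCM whose graph is $\mathcal{G}^2$ (and symmetrically). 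I would show that equality of all interventional distributions makes every such pair a "correct" one, so both $\mathrm{SID}(\mathcal{G}^1,\mathcal{G}^2)$ and $\mathrm{SID}(\mathcal{G}^2,\mathcal{G}^1)$ are zero. Concretely: the parent-adjustment formula applied in $\mathcal{G}^1$ recovers $P^1_{\mathrm{do}(V_i)}(V_j)$, which by hypothesis equals $P^2_{\mathrm{do}(V_i)}(V_j)$; since SID only penalizes a pair when the graph-$\mathcal{G}^1$-based adjustment disagrees with the true effect in the second model, and here they agree for all $(i,j)$, no pair is penalized.

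The main obstacle will be Part 2: SID is defined structurally (in terms of valid adjustment sets and consistency of parent-adjustment across the two graphs), not distributionally, so I need to carefully bridge "all interventional distributions coincide" with "the graph-based adjustment criterion is satisfied for every pair." This requires being precise about which interventional distributions SID references — it compares the effect estimated via $\mathcal{G}^1$'s parent sets against the effect entailed by $\mathcal{G}^2$ — and handling the subtlety that the intervention values $\bm{v}^1,\bm{v}^2$ may differ, so I must either argue the conclusion holds for the specific values at hand or appeal to the fact that SID is a statement about structure that is implied once the distributions match at even one value under the identifiability assumptions in force (known graph, causal sufficiency). I would also double-check the edge cases in Equation~\eqref{eq:cases} ($V_i \not\to V_j$ and $\bm{Z}=\emptyset$) to ensure the argument covers all three branches uniformly.
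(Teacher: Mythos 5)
Your Part~1 follows essentially the paper's argument: bound each interventional embedding by $\sqrt{K}$ via $\|\mu_Q\|_{\mathcal{H}_{\mathcal{V}_j}}^2 = \mathbb{E}_{V,V'\sim Q}[k_{\mathcal{V}_j}(V,V')] \leq K$, then bound each MIMD term by $\sqrt{\|\mu_{P^1}\|^2+\|\mu_{P^2}\|^2}\leq\sqrt{2K}$ and sum over the $d(d-1)$ off-diagonal pairs. Your observation that passing from the naive $2\sqrt{K}$ to $\sqrt{2K}$ requires $\langle\mu_{P^1},\mu_{P^2}\rangle\geq 0$, i.e.\ a pointwise nonnegative kernel, is correct; the paper's own proof writes the inequality $\|\mu_{P^1}-\mu_{P^2}\|\leq\sqrt{\|\mu_{P^1}\|^2+\|\mu_{P^2}\|^2}$ without comment, so you have in fact identified (and patched) a small implicit assumption. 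For the Gaussian kernel used throughout, this is harmless.

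Part~2 is where your route genuinely diverges from the paper's, and where there is a real gap --- one you flag as ``the main obstacle'' but do not close. $\mathrm{SID}(\mathcal{G}^1,\mathcal{G}^2)$ is a purely graphical quantity: it counts pairs $(i,j)$ for which the parent set of $V_i$ in $\mathcal{G}^2$ fails to be a valid adjustment set for $(V_i,V_j)$ in $\mathcal{G}^1$, a d-separation criterion that quantifies over \emph{all} distributions Markov to $\mathcal{G}^1$, not the particular $P^1$ at hand. Your hypothesis only yields that each model's own interventional distributions coincide at the chosen values, $P^1_{\mathrm{do}(V_i=v_i^1)}(V_j)=P^2_{\mathrm{do}(V_i=v_i^2)}(V_j)$, each side computed with its own graph's adjustment. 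This says nothing directly about the cross-application SID references (adjusting within $\mathcal{M}^1$ using $\mathcal{G}^2$'s parent sets), and even if it did, ``the specific distributions happen to agree'' does not imply ``the adjustment set is graphically valid'' absent a faithfulness-type assumption: a non-faithful pair can agree numerically while the d-separation criterion fails, and SID would still penalize that pair. The paper sidesteps this entirely: it first upgrades $\mathrm{SCMD}=0$ to interventional equivalence of the two SCMs (via Proposition~1), then invokes the identifiability of a DAG from its single-node interventional distributions \citep{hauser2012} to conclude $\mathcal{G}^1=\mathcal{G}^2$, after which $\mathrm{SID}(\mathcal{G}^1,\mathcal{G}^2)=\mathrm{SID}(\mathcal{G}^2,\mathcal{G}^1)=0$ is immediate because the SID of a graph with itself is zero. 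To keep your direct route you would have to prove the distribution-to-graph bridge lemma you describe; the shorter path is to establish graph equality first, as the paper does.
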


\begin{proof}
    \textbf{1. Boundedness.} Since $k_{\mathcal{V}_j}(v_j,v'_j)\leq K$ with $K\in\mathbb{R}^+$ for all $(v_j, v'_j) \in \mathcal{V}_j^2$, the interventional (conditional or mean) embedding is also bounded:
    \begin{align*}
        \|\mu_{P_{\mathrm{do}(V_i = v_i)}(V_j)}(\cdot)\|_{\mathcal{H}_{\mathcal{V}_j}}  
        &= \| \mathbb{E}_{\mathbf{Z}\sim P(\mathbb{Z})} [\mu_{P(V_j|V_i=v_i, \mathbf{Z}= \mathbf{z})}(\cdot)] \|_{\mathcal{H}_{\mathcal{V}_j}} \\
        &\leq \sqrt{\mathbb{E}_{\mathbf{Z}\sim P(\mathbb{Z})} [\mathbb{E}_{V_j,V'_j \sim P(\cdot | V_i=v_i, \mathbf{Z})} [k_{\mathcal{V}_j}(V_j,V'_j)]]}\\
        &\leq \sqrt{\|k_{\mathcal{V}_k}\|_\infty} =  \sqrt{K}.
    \end{align*}
    
    Therefore, 
    \begin{align*}
     \mathrm{MIMD}(P^1_{\mathrm{do}(V_i=v_i)}(V_j), P^2_{\mathrm{do}(V_i = v_i)}(V_j))
    := &\|\mu_{P^1_{\mathrm{do}(V_i = v_i)}(V_j)}(\cdot) - \mu_{P^2_{\mathrm{do}(V_i = v_i)}(V_j)}(\cdot)\|_{\mathcal{H}_{\mathcal{V}_j}}\\
    \leq & \sqrt{ \|\mu_{P^1_{\mathrm{do}(V_i = v_i)}(V_j)}(\cdot)\|_{\mathcal{H}_{\mathcal{V}_j}}^2  + \| \mu_{P^2_{\mathrm{do}(V_i = v_i)}(V_j)}(\cdot)\|_{\mathcal{H}_{\mathcal{V}_j}}^2} \leq \sqrt{2K}.
    \end{align*}
    
    Since SCMD is the sum of $d(d-1)$  MIMD terms, is computed $d(d-1)$ times, this establishes the desired bound for the SCMD.

    \textbf{2. Relation to SID.} Suppose that $\mathrm{SCMD}(\mathcal{M}^1,\mathcal{M}^2; \bm{v}^1, \bm{v}^2)=0$. 
   From Proposition 1, it means that $\mathcal{M}^1$ and $\mathcal{M}^2$ are interventionally equivalent. 
   From \citet{hauser2012}, it means that  the skeleton and the V-structure in both models are the same. 
   As we consider every direct effect, computing interventional distribution for every pair of variables, it means that the two graphs coincide: $\mathcal{G}^1 = \mathcal{G}^2$.
 Consequently, $$\mathrm{SID}(\mathcal{G}^1,\mathcal{G}^2)=\mathrm{SID}(\mathcal{G}^2,\mathcal{G}^1)=0.$$
\end{proof}

\subsection{Illustrative example: formula in case 2}
The main paper includes the explicit SCMD formula for Case 1. Below is the formula for Case 2, omitted in the main text for brevity:
\begin{align*}
   & \text{SCMD}(\mathcal{M}^{1,a}, \mathcal{M}^{2,a}; (x, y) )
   =\|\mu_{\mathcal{N}(a x,\,1)} - \mu_{\mathcal{N}\!\left(0,\,1+a^2\right)}\|_{\mathcal{H}}
   + \|\mu_{\mathcal{N}(0,\,1)} - \mu_{ \mathcal{N}\!\left(\tfrac{a}{1+a^2}y,\,\tfrac{1}{1+a^2}\right)}\|_{\mathcal{H}}\\
   =& \left( \sqrt{\frac{\sigma^2}{\sigma^2+2}} + \sqrt{\frac{\sigma^2}{\sigma^2+2(1+a^2)}}  
   - 2\sqrt{\frac{\sigma^2}{\sigma^2+2+a^2}}
     \exp{\left(-\frac{a^2x^2}{2(\sigma^2+2+a^2)}\right)}\right)^{1/2} \\
   &+ \left( \sqrt{\frac{\sigma^2}{\sigma^2+2}}  + \sqrt{\frac{\sigma^2}{\sigma^2+\frac{2}{1+a^2}}}  -2 \sqrt{\frac{\sigma^2}{\sigma^2+1+\frac{1}{1+a^2}}}
     \exp{\left(-\frac{\frac{a^2}{1+a^2}y^2}{2(\sigma^2+1+\frac{1}{1+a^2})}\right)} \right)^{1/2}
\end{align*}

\subsection{Study of the estimator: proof of the rate of convergence}

\begin{theorem}
Suppose that $k_{\mathcal{V}_i}, k_{\mathcal{V}_j}$ and $k_{\mathcal{Z}}$ are bounded kernels, and that the operator-valued kernel $k_{\mathcal{V}\mathcal{Z}}$ is $C_0$ universal. Let the regularization parameter $\lambda$ decay to 0 at a slower rate than $\mathcal{O}(N^{-1/2})$. 
Then $\widehat{\mu}^{\lambda,N}_{P_{\text{do}(V_i = v_i)}(V_j)}$ is universally consistent.

Assume further that $\mu_{P_{\text{do}(V_i = v_i)}(V_j)}(\cdot) \in \widetilde{\mathcal{H}}$. Then, with probability $1-\delta$, 
$$\tilde{\mathcal{E}}_{N, \lambda}(\widehat{\mu}^{\lambda,N}_{P_{\text{do}(V_i = v_i)}(V_j)}) - \tilde{\mathcal{E}}_{ N, \lambda}(\mu_{P_{\text{do}(V_i = v_i)}(V_j)}) = \mathcal{O}_P (N^{-1/4}).$$
\end{theorem}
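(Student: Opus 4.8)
The plan is to reduce the statement to the two convergence results already established in the literature for conditional mean embeddings, handling the three cases of Equation~\eqref{eq:cases} separately and then combining. For the \emph{universal consistency} part, the estimator $\widehat{\Omega}^{\lambda,N}_{v_i}$ takes one of three forms. In the case $V_i \not\to V_j$ (Equation~\eqref{kw:sim}) the estimator is simply the empirical marginal mean embedding $\frac{1}{N}\mathbf{1}_N^\top \bm{k}_{V_j}(\cdot)$, which is consistent at rate $\mathcal{O}(N^{-1/2})$ by the classical result of \citet{gretton2012kernel}. In the case $\bm{Z}=\emptyset$ (Equation~\eqref{kw:cond}) it is the regularized conditional mean embedding estimator of \citet{park2020measure}, whose universal consistency under a $C_0$-universal operator-valued kernel and $\lambda \to 0$ slower than $N^{-1/2}$ is exactly their Theorem~4.4. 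In the remaining case (Equation~\eqref{kw:int}) the estimator is the interventional embedding of \citet{dhanakshirur2023continuous}, which is built as an empirical average over $\bm{z}^{(n)}$ of conditional embeddings; consistency there follows by combining the consistency of the inner conditional embeddings with the law of large numbers for the outer average over $P(\bm{Z})$, exactly as in that reference. Since in every case the hypotheses of the corresponding cited result are implied by our assumptions (bounded $k_{\mathcal{V}_i}, k_{\mathcal{V}_j}, k_{\mathcal{Z}}$; $C_0$-universal $k_{\mathcal{V}\mathcal{Z}}$; $\lambda$ decaying slower than $N^{-1/2}$), universal consistency of $\widehat{\mu}^{\lambda,N}_{P_{\text{do}(V_i=v_i)}(V_j)}$ follows.

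For the \emph{rate} statement, I would work directly with the regularized empirical risk $\tilde{\mathcal{E}}_{N,\lambda}$. The key observation is that $\widehat{\mu}^{\lambda,N}_{P_{\text{do}(V_i=v_i)}(V_j)}$ is by construction the minimizer of $\tilde{\mathcal{E}}_{N,\lambda}$ over $\widetilde{\mathcal{H}}$ (this is stated in the estimator-construction subsection, following \citet{park2020measure}), so the difference $\tilde{\mathcal{E}}_{N,\lambda}(\widehat\mu) - \tilde{\mathcal{E}}_{N,\lambda}(\mu_{P_{\text{do}}})$ is non-positive and we only need an upper bound on its magnitude, i.e.\ a lower bound control. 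Write $\mu^\star := \mu_{P_{\text{do}(V_i=v_i)}(V_j)}$, assumed to lie in $\widetilde{\mathcal{H}}$. Decompose $\tilde{\mathcal{E}}_{N,\lambda}(\widehat\mu)-\tilde{\mathcal{E}}_{N,\lambda}(\mu^\star)$ through the population risk $\tilde{\mathcal{E}}_\lambda$: bound $|\tilde{\mathcal{E}}_{N,\lambda}(\mu)-\tilde{\mathcal{E}}_\lambda(\mu)|$ uniformly over a norm ball containing both $\widehat\mu$ and $\mu^\star$ using a concentration/Rademacher-complexity argument for RKHS-valued regression (McDiarmid plus symmetrization, with the boundedness of the kernels giving a bounded loss), which yields a $\mathcal{O}_P(N^{-1/2})$ estimation term; then bound the approximation/bias term $\tilde{\mathcal{E}}_\lambda(\widehat\mu_\lambda^\star)-\tilde{\mathcal{E}}_\lambda(\mu^\star)$ where $\widehat\mu_\lambda^\star$ is the population regularized minimizer, which under $\mu^\star \in \widetilde{\mathcal{H}}$ is $\mathcal{O}(\lambda)$. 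Balancing against the prescribed decay of $\lambda$ (slower than $N^{-1/2}$) and tracking the regularization constant $1/\lambda$ that enters the stability bound for the empirical minimizer produces the overall $\mathcal{O}_P(N^{-1/4})$ rate — the same rate and mechanism already quoted for the conditional embedding estimator of \citet{park2020measure}, which this proof essentially re-invokes, with the extra outer average over $\bm{Z}$ in the interventional case contributing only a further $\mathcal{O}_P(N^{-1/2})$ term that does not dominate.

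The main obstacle, and the step deserving the most care, is the interventional case~\eqref{kw:int}: unlike the plain conditional embedding, here the estimator is a two-stage object — an empirical average over the sample $(\bm{z}^{(n)})$ of plug-in conditional embeddings that themselves share the same sample — so the outer average and the inner regression are not independent, and one cannot naively multiply error bounds. The clean way around this is to note that $\widehat{\Omega}^{\lambda,N}_{v_i,\bm{z}}$ in~\eqref{kw:int} can still be written as a single RKHS-valued ridge-regression solution against the Hadamard-product kernel $\bm{K}_{V_i,\bm{Z}}$, so the whole object falls under the \citet{park2020measure} framework applied to the enlarged conditioning variable $(V_i,\bm{Z})$ with the intervention value $v_i$ marginalized by the empirical law of $\bm{Z}$; once phrased this way, the uniformity-over-$v_i$-plus-averaging step is handled exactly as in \citet{dhanakshirur2023continuous}, and no genuinely new concentration argument is needed. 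I would also need to state explicitly (as the theorem already does) that $\mu^\star \in \widetilde{\mathcal{H}}$ for the rate — the consistency part does not require it — and remark that, as in the cited works, $\mathcal{O}_P(N^{-1/4})$ is the excess-risk rate on $\tilde{\mathcal{E}}_{N,\lambda}$, not a rate in $\widetilde{\mathcal{H}}$-norm, which is why it is slower than the $\mathcal{O}(N^{-1/2})$ marginal rate noted in the remark following the theorem.
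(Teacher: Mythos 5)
Your proposal is correct and follows essentially the same route as the paper's own proof, which reduces both claims to the conditional-mean-embedding results of Park and Muandet (2020) by viewing the interventional embedding as an empirical average over $\bm{Z}$ of conditional mean embeddings. Your write-up is in fact more detailed than the paper's two-line argument: you treat the three cases of the estimator explicitly and flag the sample-reuse dependence between the outer average over $\bm{z}^{(n)}$ and the inner ridge regression (resolving it via the Hadamard-product kernel formulation), a subtlety the paper leaves implicit.
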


\begin{proof}
\begin{itemize}
    \item \textbf{Universal consistency} It directly follows from \cite{park2020,park2020measure}.
Specifically, the interventional embedding is constructed as the empirical mean, over the adjustment set $\mathbf{Z}$, of the conditional mean embedding.
 \citet{park2020,park2020measure} establish that the estimator  $\widehat{\mu}^{\lambda,N}_{P(V_j | V_i = v_i, \mathbf{Z})}$ is universally consistent. If the kernel is universal and $\mathbf{Z}$ compact, $\widehat{\mu}^{\lambda,N}_{P(V_j | V_i = v_i, \mathbf{Z})}$ converges  uniformly on $\mathbf{Z}$.
   Therefore, the resulting estimator  $\mathbb{E}_{\mathbf{Z}}(\widehat{\mu}^{\lambda,N}_{P(V_j | V_i = v_i, \mathbf{Z})})$ is universally consistent. 
   \item \textbf{Rate of convergence.} Similarly, as the interventional estimator is an empirical mean of a conditional mean embedding, the rate of convergence follows from  \citet{park2020,park2020measure}.
\end{itemize}
\end{proof}

\section{Algorithmic details}
\subsection{Python code notice}
We provide a zip archive containing three Python scripts to ensure full reproducibility of our experiments:
\begin{itemize}
    \item \texttt{SCMD\_Function.py} implements core functions: kernel, norm, cross-norm, and the SCMD algorithm.
    \item \texttt{Synthetic\_data\_experiments.py} reproduces all experiments on synthetic data, as described in the main paper and appendix. {This file is organized as follows. The first two for loops reproduce the results corresponding to Case 1 and Case 2, respectively. Next, the theoretical values of SCMD and MMD are computed. The final section contains the code used to generate Figure 1 in the main paper.}
    \item \texttt{Real-world\_data\_experiments.py} reproduces all experiments on real-world datasets, as described in the main paper and appendix.  {This file is organized as follows. After the code generating the datasets, the subsequent sections contain the loops used for the computation of SCMD and MMD. The final part includes the code employed to produce Figure 3 in the main paper.}
\end{itemize}

\subsection{Algorithmic complexity}
In \textbf{Algorithm~1}, we compute the estimation of the interventional/conditional/mean embedding. This relies, in the worst case, on the computation of a Gram matrix associated with a kernel and on the inversion of a matrix. Each computation has a complexity of $\mathcal{O}(N^3)$ for a sample size of $N$, but we have at most $d$ Gram matrices to compute (hidden in the Hadamard product). Therefore, the worst-case complexity of \textbf{Algorithm~1} is $\mathcal{O}(dN^3)$.

\textbf{Algorithm~2} also computes Gram matrices for one variable and uses \textbf{Algorithm~1}. Thus, its complexity is also $\mathcal{O}(dN^3)$.

To compute the SCMD, we apply MIMD to every pair of variables, i.e., $d(d-1)$ times. The total computational complexity then becomes $\mathcal{O}(d^3N^3)$.

As an illustration, we display the  computation time of SCMD on a standard laptop.
For a dataset with $d=5$ variables and $N=1000$ observations, it  requires approximately $7$ seconds, for a dataset with $d=5$ variables and $N=10000$ observations, it increases to about $12$ minutes on the same hardware.

\section{Additional experiments}

\subsection{Illustrative example: sensitivity analysis}
We conduct additional experiments to evaluate the impact of the two hyperparameters: $\sigma^2$ (the kernel bandwidth) and $\lambda$ (the regularization parameter in the Ridge regression). The corresponding results are reported in Table \ref{tab:res_sigma01}. 

\textbf{Regarding $\lambda$.} As expected, increasing $\lambda$ leads to 
a slight decrease in the values  of $\widehat{\mathrm{SCMD}}_{\mathrm{plug\text{-}in}}$, and consequently in  those of $\widehat{\mathrm{P\text{-}SCMD}}_{Y}$ and $\widehat{\mathrm{P\text{-}SCMD}}_{X}$. This effect is more pronounced in Case 1, indicating a higher sensitivity to the regularization  parameter $\lambda$. 

\textbf{Regarding $\sigma^2$.} The theoretical formulation of SCMD explicitly depends on $\sigma^2$, and our results reflect this dependency. In Case 1, the estimated values range from $0.5177$ ($\sigma^2 = 0.1$) to 
$0.7549$ ($\sigma^2 = 1.5$), while in Case 2, they vary from $0.8921$ to $1.0048$. Notably, there is no clear monotonic trend: in Case 2, the highest value is observed for $\sigma^2 = 1$, suggesting that this bandwidth captures more relevant information for the given data. 

This underscores the importance of careful tuning of $\sigma^2$, as widely recognized in the literature (with several strategies to select it, for example as in \citet{biggs2023mmdfuse}).
\begin{table}[t]
\centering
\caption{Results on the simulated example.
Case 1: same causal graph but different joint distributions;
Case 2: different causal graphs but same joint distribution.
Reported values are means $\pm$ standard deviation over $50$ repetitions. We vary the kernel bandwidth (each subtable), and the regularization parameter for SCMD.}
\label{tab:res_sigma01}
\subcaption{$\sigma^2 = 0.1$. For Case 1, SID = 0, MMD = 0.0515, its estimation is $0.1215 \pm 0.0013$; for Case 2, SID = 2, MMD = 0, its estimation is $0.0140 \pm 0.0016$. }
\begin{tabular}{lcccccc}
\toprule
 & \multicolumn{3}{c}{Case 1} & \multicolumn{3}{c}{Case 2} \\
\midrule
SCMD & \multicolumn{3}{c}{$0.5177$} & \multicolumn{3}{c}{$0.8921$} \\
\midrule
$\widehat{\mathrm{SCMD}}_{\mathrm{plug\text{-}in}}$ & \multicolumn{3}{c}{$0.5248 \pm 0.005$} & \multicolumn{3}{c}{$0.8929 \pm 0.004$} \\
\midrule
& $\lambda=0.1$ & $\lambda=0.5$ & $\lambda=1$ & $\lambda=0.1$ & $\lambda=0.5$ & $\lambda=1$ \\ 
\midrule
$\widehat{\mathrm{SCMD}}_{\mathrm{kernel}}$ & $0.5369 \pm 0.02$ & $0.5360 \pm 0.02$ & $0.5351 \pm 0.02$ & $1.0053 \pm 0.02$ & $1.0046 \pm 0.02$ & $1.0038 \pm 0.02$ \\
\midrule
$\widehat{\mathrm{P\text{-}SCMD}}_{Y}$ & $0.5247 \pm 0.02$ & $0.5237 \pm 0.02$ & $0.4059 \pm 0.01$ & $0.4066 \pm 0.01$ & $0.4062 \pm 0.01$ & $0.4059 \pm 0.01$ \\
$\widehat{\mathrm{P\text{-}SCMD}}_{X}$ & $0.0122 \pm 0.00$ & $0.0122 \pm 0.00$ & $0.0122 \pm 0.00$ & $0.5987 \pm 0.02$ & $0.5984 \pm 0.01$ & $0.5979 \pm 0.01$ \\
\midrule
$\widehat{\mathrm{E\text{-}SCMD}}$ & $0.5846 \pm 0.01$ & $0.5819 \pm 0.01$ & $0.5797 \pm 0.01$ & $0.9469 \pm 0.01$ & $0.9473 \pm 0.01$ & $0.9454 \pm 0.01$ \\
\bottomrule
\end{tabular}
\subcaption{$\sigma^2 = 1$. For Case 1, SID = 0, MMD = 0.1758, its estimation is $0.1761 \pm 0.004$; for Case 2, SID = 2, MMD = 0, its estimation is $0.0128 \pm 0.004$.}
\begin{tabular}{lcccccc}
\toprule
 & \multicolumn{3}{c}{Case 1} & \multicolumn{3}{c}{Case 2} \\
\midrule
SCMD & \multicolumn{3}{c}{$0.7496$} & \multicolumn{3}{c}{$1.0048$} \\
\midrule
$\widehat{\mathrm{SCMD}}_{\mathrm{plug\text{-}in}}$ & \multicolumn{3}{c}{$0.7588 \pm 0.01$} & \multicolumn{3}{c}{$1.0050 \pm 0.00$} \\
\midrule
& $\lambda=0.1$ & $\lambda=0.5$ & $\lambda=1$ & $\lambda=0.1$ & $\lambda=0.5$ & $\lambda=1$ \\ 
\midrule
$\widehat{\mathrm{SCMD}}_{\mathrm{kernel}}$ & $0.6433 \pm 0.01$ & $0.6099 \pm 0.01$ & $0.5925 \pm 0.01$ & $1.2035 \pm 0.01$ & $1.1991 \pm 0.01$ & $1.1961 \pm 0.01$ \\
\midrule
$\widehat{\mathrm{P\text{-}SCMD}}_{Y}$ & $0.6354 \pm 0.01$ & $0.6021 \pm 0.01$ & $0.5847 \pm 0.01$ & $0.6257 \pm 0.01$ & $0.6232 \pm 0.01$ & $0.6202 \pm 0.01$ \\
$\widehat{\mathrm{P\text{-}SCMD}}_{X}$ & $0.0078 \pm 0.00$ & $0.0078 \pm 0.00$ & $0.0078 \pm 0.00$ & $0.5760 \pm 0.01$ & $0.5759 \pm 0.01$ & $0.5759 \pm 0.01$ \\
\midrule
$\widehat{\mathrm{E\text{-}SCMD}}$ & $0.9069 \pm 0.01$ & $0.8847 \pm 0.01$ & $0.8712 \pm 0.01$ & $1.0968 \pm 0.01$ & $1.0917 \pm 0.01$ & $1.0882 \pm 0.01$ \\
\bottomrule
\end{tabular}
\subcaption{$\sigma^2 = 1.5$. For Case 1, SID = 0, MMD = 0.1941, its estimation is $0.1850 \pm 0.004$; for Case 2, SID = 2, MMD = 0, its estimation is $0.0123 \pm 0.004$.}
\begin{tabular}{lcccccc}
\toprule
 & \multicolumn{3}{c}{Case 1} & \multicolumn{3}{c}{Case 2} \\
\midrule
SCMD & \multicolumn{3}{c}{$0.7549$} & \multicolumn{3}{c}{$0.9776$} \\
\midrule
$\widehat{\mathrm{SCMD}}_{\mathrm{plug\text{-}in}}$ & \multicolumn{3}{c}{$0.7639 \pm 0.01$} & \multicolumn{3}{c}{$0.9776 \pm 0.00$} \\
\midrule
& $\lambda=0.1$ & $\lambda=0.5$ & $\lambda=1$ & $\lambda=0.1$ & $\lambda=0.5$ & $\lambda=1$ \\ 
\midrule
$\widehat{\mathrm{SCMD}}_{\mathrm{kernel}}$ & $0.6028 \pm 0.01$ & $0.5671 \pm 0.01$ & $0.5471 \pm 0.01$ & $1.1816 \pm 0.01$ & $1.1750 \pm 0.01$ & $1.1691 \pm 0.01$ \\
\midrule
$\widehat{\mathrm{P\text{-}SCMD}}_{Y}$ & $0.5959 \pm 0.01$ & $0.5602 \pm 0.01$ & $0.5403 \pm 0.01$ & $0.6511 \pm 0.01$ & $0.6443 \pm 0.01$ & $0.6381 \pm 0.01$ \\
$\widehat{\mathrm{P\text{-}SCMD}}_{X}$ & $0.0069 \pm 0.00$ & $0.0069 \pm 0.00$ & $0.0068 \pm 0.00$ & $0.5304 \pm 0.01$ & $0.5307 \pm 0.01$ & $0.5310 \pm 0.01$ \\
\midrule
$\widehat{\mathrm{E\text{-}SCMD}}$ & $0.8847 \pm 0.01$ & $0.9018 \pm 0.01$ & $0.8932 \pm 0.00$ & $1.0599 \pm 0.01$ & $1.0515 \pm 0.01$ & $1.0460 \pm 0.01$ \\
\bottomrule
\end{tabular}
\end{table}

\clearpage
\subsection{Real-world dataset: sensitivity analysis}
In Figure \ref{fig:SCMD_MMD_comparison}, we display the heatmaps of SCMD and MMD (respectively left and right) computed for each pair of environments under varying values of $\sigma^2$. The environments 'cd3cd28' and 'cd3cd28+aktinhib' remain highly similar according to SCMD, regardless of the value of $\sigma^2$. Conversely, the "b2camp" intervention continues to exhibit high dissimilarity with all other environments. The MMD, in contrast, appears unaffected by the considered variations in $\sigma^2$.

\begin{figure*}[t]
    \centering
    \begin{subfigure}[t]{\textwidth}
        \centering
        \includegraphics[width=0.8\textwidth, trim = 0 1.5cm 0 0.9cm, clip = TRUE]{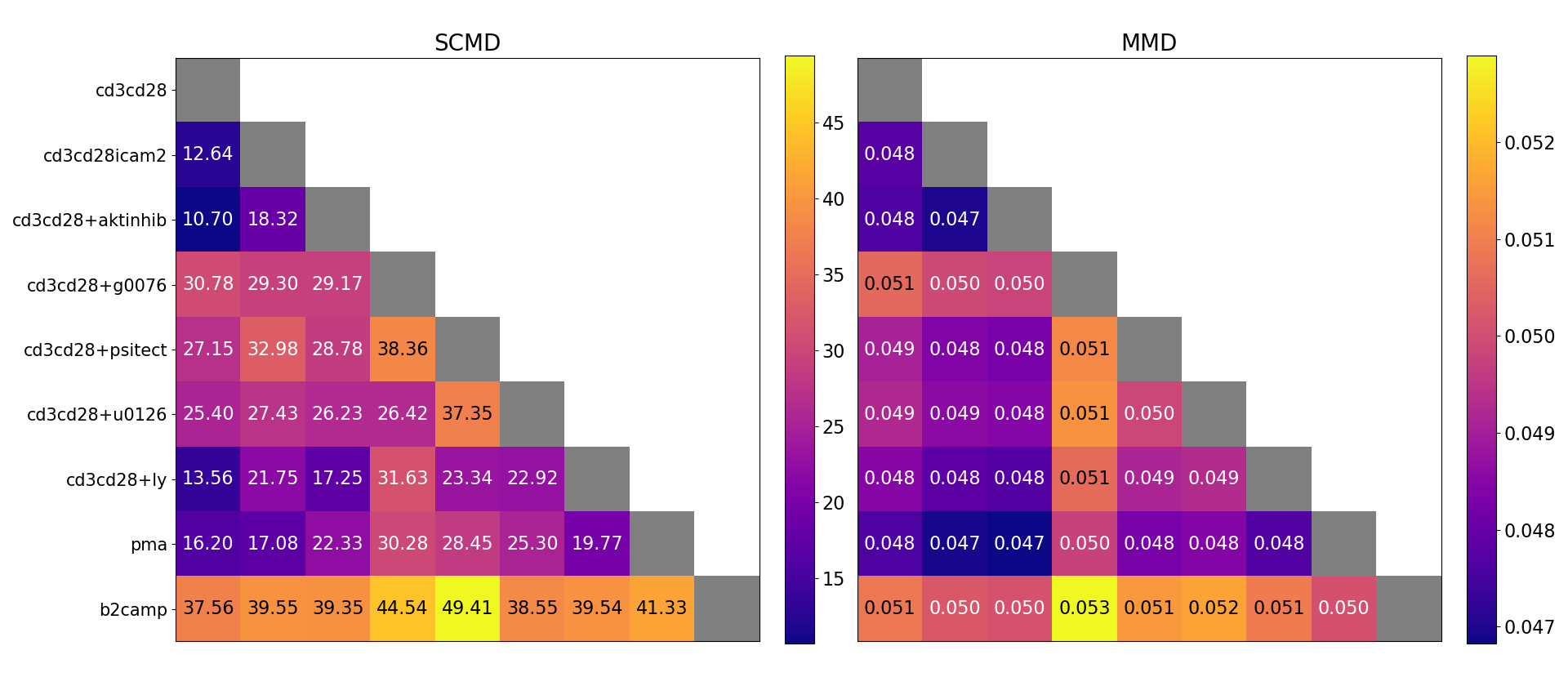}
        \caption{$\sigma^2=5$}
        \label{fig:sigma5}
    \end{subfigure}\vspace{0.5em}
    
    \begin{subfigure}[t]{\textwidth}
        \centering
        \includegraphics[width=0.8\textwidth, trim = 0 1.5cm 0 0.9cm, clip = TRUE]{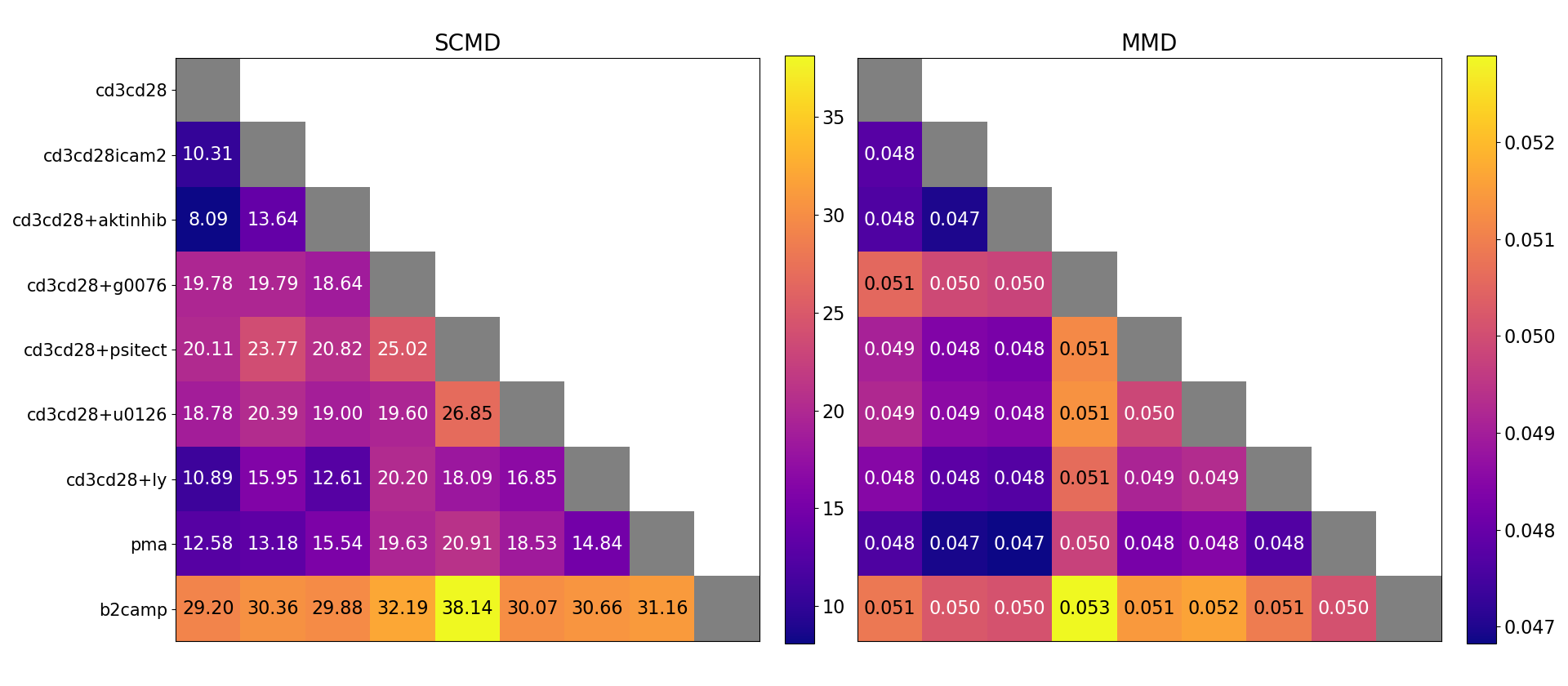}
        \caption{$\sigma^2=1$}
        \label{fig:sigma1}
    \end{subfigure}
\caption{Heatmaps of pairwise environment distances (lower-triangular part shown). Left: SCMD, capturing both structural and distributional discrepancies. Right: MMD, based only on distributional differences. We test two values for the kernel bandwidth: top, (a) $\sigma^2=5$, bottom, (b) $\sigma^2=1$.}
    \label{fig:SCMD_MMD_comparison}
\end{figure*}

\end{document}